\newtheorem{definition}{Definition}
\newtheorem{theorem}[definition]{Theorem}
\newtheorem{theoremA}{Theorem}
\newtheorem{theoremB}{Theorem}
\newtheorem{corollary}[definition]{Corollary}
\newtheorem{conjecture}[definition]{Conjecture}
\theoremstyle{remark}
\newtheorem{remark}[definition]{Remark}
\begin{document}
\title{Sphere Theorems with and without Smoothing}
\author{Jialong Deng}
\date{}
\newcommand{\Addresses}{{
  \bigskip
  \footnotesize
  \textsc{Mathematisches Institut, Georg-August-Universit{\"a}t, G{\"o}ttingen, Germany}\par\nopagebreak
  \textit{Current address}: \textsc{Yau Mathematical Sciences Center, Tsinghua University, Beijing, China}\par\nopagebreak
  \textit{E-mail address}: \texttt{jialong.deng@mathematik.uni-goettingen.de}}}
\maketitle
\begin{abstract}
We show two sphere theorems for the Riemannian manifolds with scalar curvature bounded below and  the non-collapsed  $\mathrm{RCD}(n-1,n)$ spaces with mean distance close to $\frac{\pi}{2}$.
\end{abstract}

\section{Introduction}

Beginning with the Gauss-Bonnet theorem, sphere theorems show how  geometry can be used to decide the topology of a manifold. By asking the 1/4-pinching question, which is  now  a theorem, Hopf opened a door to the study of  sphere theorems.  More previous results and history can be found in the survey papers \cite{MR1452866}, \cite{MR2537082},\cite{MR2738904} etc.

 Continuing the paradigm, we will add two kinds of sphere theorems to enrich the subject. Details about the conditions  will be given later.

\begin{theoremA}\label{theorem A}

Let $(M,g)$ be an orientable closed  Riemannian $n$-manifold with scalar curvature$\geq n(n-1)$.  Suppose that  there exists a $(1, \wedge^1)$-contracting map $f: M \to S^n$ of non-zero degree and that the map $f$ is harmonic with condition $C\leq 0$, then $f$ is an isometric map.

\end{theoremA}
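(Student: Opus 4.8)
The plan is to run the Bochner--Eells--Sampson technique for the harmonic map $f$, in the spirit of Llarull's spinorial rigidity theorem but with the twisted Dirac operator replaced by the Hodge Laplacian on $f^{*}TS^{n}$-valued $1$-forms; the hypothesis ``$(1,\wedge^{1})$-contracting'' I read as $\|df\|_{\mathrm{op}}\le 1$, i.e.\ all singular values $\lambda_{1},\dots,\lambda_{n}$ of $df$ are $\le 1$. Since $f$ is harmonic its tension field $\tau(f)=\mathrm{tr}_{g}\nabla df$ vanishes, and because $\nabla df$ is symmetric the $f^{*}TS^{n}$-valued $1$-form $df$ is automatically $d^{\nabla}$-closed; hence $df$ is both closed and co-closed, so $\Delta^{\nabla}(df)=0$. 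Pairing the Weitzenb\"ock formula for $\Delta^{\nabla}$ against $df$, integrating over the closed manifold $M$, choosing at each point singular-value frames $\{e_{i}\}$ of $T_{p}M$ and $\{\epsilon_{i}\}$ of $T_{f(p)}S^{n}$ with $df(e_{i})=\lambda_{i}\epsilon_{i}$, and using that $S^{n}$ has constant sectional curvature $1$, this yields
\[
0=\int_{M}|\nabla df|^{2}\, dV_{g}+\int_{M}\Big(\sum_{i}\lambda_{i}^{2}\,\mathrm{Ric}^{M}(e_{i},e_{i})-\sum_{i\neq j}\lambda_{i}^{2}\lambda_{j}^{2}\Big)\, dV_{g}.
\]

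Next I would rearrange the curvature integrand. Writing $\sum_{i\neq j}\lambda_{i}^{2}\lambda_{j}^{2}=\sum_{i}\lambda_{i}^{2}\sum_{j\neq i}\lambda_{j}^{2}$ and inserting $\pm(n-1)$ and $\pm 1$ in the obvious places,
\[
\sum_{i}\lambda_{i}^{2}\,\mathrm{Ric}^{M}(e_{i},e_{i})-\sum_{i\neq j}\lambda_{i}^{2}\lambda_{j}^{2}
=\sum_{i}\lambda_{i}^{2}\big(\mathrm{Ric}^{M}(e_{i},e_{i})-(n-1)\big)+\sum_{i\neq j}\lambda_{i}^{2}\big(1-\lambda_{j}^{2}\big).
\]
The second sum is pointwise nonnegative precisely because $f$ is $(1,\wedge^{1})$-contracting ($\lambda_{j}\le 1$). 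The first sum is the delicate one: the scalar curvature bound gives only $\sum_{i}\big(\mathrm{Ric}^{M}(e_{i},e_{i})-(n-1)\big)=\mathrm{Scal}^{M}-n(n-1)\ge 0$, i.e.\ control of the \emph{unweighted} sum, whereas here the coefficients $\lambda_{i}^{2}\in[0,1]$ are unequal and $\mathrm{Ric}^{M}(e_{i},e_{i})$ may be negative in some directions. This is exactly where the hypothesis ``harmonic with condition $C\le 0$'' must enter: it should be designed so that this weighted Ricci term is nonnegative (pointwise, or at least after integration), equivalently so that the whole curvature integrand is bounded below by the manifestly nonnegative quantity $\sum_{i\neq j}\lambda_{i}^{2}(1-\lambda_{j}^{2})$. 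I expect this to be the main obstacle --- verifying that the stated $C\le 0$ indeed forces nonnegativity of the curvature integrand and reading off its equality case --- the harmonic-map analogue of Llarull's eigenvalue estimate for the curvature endomorphism of the twisting spinor bundle, and the reason the spin assumption can be traded for ``harmonic $+$ $C\le 0$''.

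Granting this, the integral identity forces $\nabla df\equiv 0$ and the curvature integrand $\equiv 0$; in particular $\sum_{i\neq j}\lambda_{i}^{2}(1-\lambda_{j}^{2})\equiv 0$. Since $\deg f\neq 0$, the Jacobian of $f$ is nonzero on a set of positive measure, so $df$ has rank $n$ there; the singular values are continuous functions on $M$, and by connectedness of $M$ together with the vanishing just noted one gets $\lambda_{i}\equiv 1$ everywhere. Hence $f^{*}g_{S^{n}}=g$, so $f$ is a local isometry; being a local isometry from the complete (closed) manifold $M$ onto the simply connected $S^{n}$ (for $n\ge 2$), it is a Riemannian covering, and having nonzero degree it is an isometry. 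The same identity then also forces $\mathrm{Scal}^{M}\equiv n(n-1)$, consistent with the conclusion. Routine points I would check along the way: the validity of the Weitzenb\"ock/Eells--Sampson identity in this twisted setting, the measure-zero behaviour of the singular-value frames where $df$ drops rank, and the integration by parts (harmless since $M$ is closed and $f$ is a smooth harmonic map).
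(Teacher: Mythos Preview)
Your approach has a genuine gap rooted in a misidentification of the hypothesis ``condition $C\le 0$''. In the paper $C$ is \emph{not} a condition on the weighted Ricci term; it is the specific second-order scalar invariant of the map
\[
C=\sum_{a,b,i,j,k} B^{i}_{a}B^{k}_{b}A^{a}_{kj}A^{b}_{ij},
\]
built from the second fundamental form $A^{a}_{ij}$ of $f$ and the adjugate $B^{i}_{a}$ of $df$ (Chern--Goldberg). It carries no Ricci information, so it cannot rescue the term $\sum_{i}\lambda_{i}^{2}\bigl(\mathrm{Ric}^{M}(e_{i},e_{i})-(n-1)\bigr)$ in your Eells--Sampson identity. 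That term is genuinely uncontrolled by scalar curvature alone, and your integrated Bochner argument stalls exactly where you flagged it.

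The paper avoids this obstruction by working not with $|df|^{2}$ but with the square of the Jacobian $V=A^{2}=(\det df)^{2}$. The Chern--Goldberg computation of $\Delta V$ for a harmonic map gives
\[
\tfrac{1}{2}\Delta V \;=\; 2\sum_{j}\mathbb{A}_{j}^{2}\;+\;V\Bigl(R-\sum_{b,c,j}R^{*}_{bc}A^{b}_{j}A^{c}_{j}\Bigr)\;-\;C,
\]
and the point is that the \emph{scalar} curvature $R$ of $M$ appears, not its Ricci tensor. Since $S^{n}$ is Einstein, $\sum R^{*}_{bc}A^{b}_{j}A^{c}_{j}=\frac{R^{*}}{n}\sum(A^{a}_{i})^{2}=\frac{R^{*}}{n}\sum_{i}\lambda_{i}^{2}\le R^{*}=n(n-1)$ by the $(1,\wedge^{1})$-contracting hypothesis. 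With $R\ge n(n-1)$ and $C\le 0$ this yields $\Delta V\ge 0$; at the maximum of $V$ (which is positive because $\deg f\neq 0$) one has $\Delta V\le 0$, forcing equality and hence $\lambda_{i}\equiv 1$. So the role of $C\le 0$ is to absorb a Hessian-type remainder in the $\Delta V$ formula, not to fix up a weighted Ricci sum; and the switch from $|df|^{2}$ to $(\det df)^{2}$ is precisely what trades Ricci for scalar curvature and makes the hypothesis usable.
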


\begin{remark}
The idea of the proof is based on Chern-Goldberg's argument.
Their argument originated in  S.S. Chern's results that generalize Schwartz lemma to Hermitian manifolds in \cite{MR0234397} and was used to form the Riemannian version of Schwarz lemma in \cite{MR367860}.   In fact, the target of the map can be relaxed to general Einstein manifolds, which is shown by our proof.
\end{remark}

\begin{theoremB}\label{theorem B}

Let $(X,d, \mathcal{H}^n)$ be a  compact non-collapsed  $\mathrm{RCD}(n-1,n)$ space with full support and $\mathrm{md}(X)$ be the mean distance of $(X,d, \mathcal{H}^n)$, then
\begin{enumerate}
\item[I.]  $\mathrm{md}(X)\leq \frac{\pi}{2}$. 
\item[II.] $\mathrm{md}(X)= \frac{\pi}{2}$ if and only if $X$ is isometric to the standard round $n$-sphere $S^n$ and $\mathcal{H}^n=a\mathrm{dVol}$ for some $a>0$, where $\mathrm{dVol}$ is the volume form of $S^n$. 
\item[III.]  there is an $\epsilon(n)>0$ such that $\mathrm{md}(X)\geq \frac{\pi}{2}-\epsilon(n)$ implies that $X$ is homeomorphic to  $S^n$.
\end{enumerate}
\end{theoremB}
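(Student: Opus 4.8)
The plan is to reduce Theorem~B to the behaviour of the \emph{distance distribution function}. For a compact metric measure space $(X,d,\mathfrak m)$ of finite total mass put $\mu=\mathfrak m/\mathfrak m(X)$ and
\[
F(r)=(\mu\times\mu)\bigl(\{(x,y):d(x,y)<r\}\bigr)=\frac{1}{\mathfrak m(X)^2}\int_X\mathfrak m\bigl(B_r(x)\bigr)\,d\mathfrak m(x),
\]
so that $\mathrm{md}(X)=\int_0^\infty(1-F(r))\,dr$. Write $V(r)$ for the volume of a geodesic $r$-ball in the round unit $S^n$ (so $V(\pi)=\mathcal H^n(S^n)$ and $V(r)/r^n$ has a positive finite limit as $r\to0$) and set $F_{S^n}(r):=V(r)/V(\pi)$; then $\int_0^\pi(1-F_{S^n}(r))\,dr=\mathrm{md}(S^n)=\frac{\pi}{2}$, because the distance from a fixed point of $S^n$ to a uniform point has a distribution symmetric about $\frac{\pi}{2}$ (the antipodal map sends it to $\pi$ minus itself). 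For Part~I: Bonnet--Myers for $\mathrm{RCD}(n-1,n)$ spaces gives $\mathrm{diam}(X)\le\pi$, so $B_\pi(x)=X$ for all $x$ and $\mathrm{md}(X)=\int_0^\pi(1-F(r))\,dr$; the Bishop--Gromov inequality (valid in $\mathrm{CD}(n-1,n)$ for the reference measure) applied at each $x$ between radii $r$ and $\pi$ gives $\mathfrak m(B_r(x))\ge\frac{\mathfrak m(X)}{V(\pi)}V(r)$, and averaging in $x$ yields the pointwise bound $F\ge F_{S^n}$ on $[0,\pi]$, whence $\mathrm{md}(X)\le\int_0^\pi(1-F_{S^n})\,dr=\frac{\pi}{2}$. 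Only the $\mathrm{CD}(n-1,n)$ condition is used here, not that $\mathfrak m=\mathcal H^n$.

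For Part~II, equality $\mathrm{md}(X)=\frac{\pi}{2}$ forces $\int_0^\pi(F-F_{S^n})=0$; since the integrand is $\ge0$ and the two functions are monotone and continuous respectively, $F\equiv F_{S^n}$, hence equality holds in Bishop--Gromov: $\mathfrak m(B_r(x))=\frac{\mathfrak m(X)}{V(\pi)}V(r)$ for $\mathfrak m$-a.e.\ $x$ and all $r\in(0,\pi]$. Letting $r\to0$ shows that $\lim_{r\to0}\mathfrak m(B_r(x))/r^n$ exists and is a \emph{positive finite constant} for $\mathfrak m$-a.e.\ $x$; by the rectifiability/structure theory of $\mathrm{RCD}$ spaces this forces $\mathfrak m=a\,\mathcal H^n$ for some $a>0$, so $X$ is automatically non-collapsed --- I flag this because it is exactly what Part~III needs. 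Matching the density with the Euclidean value at a regular point of a non-collapsed $\mathrm{RCD}(n-1,n)$ space then gives $\mathcal H^n(X)=V(\pi)=\mathcal H^n(S^n)$, i.e.\ $X$ has maximal volume. To conclude that $X$ is isometric to the round $S^n$ (with $\mathcal H^n$ transporting to $a\,\mathrm{dVol}$) I would invoke either maximal-volume rigidity for non-collapsed $\mathrm{RCD}(n-1,n)$ spaces, or the fact that $F\equiv F_{S^n}<1$ on $(0,\pi)$ forces $\mathrm{diam}(X)=\pi$, then Ketterer's maximal-diameter theorem (so $X$ is a spherical suspension), then an induction on the dimension of the base using that the Bishop--Gromov equality also holds at non-polar points. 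The reverse implication is the computation already made in Part~I.

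For Part~III I would argue by compactness and contradiction. If no admissible $\epsilon(n)>0$ existed, there would be compact non-collapsed $\mathrm{RCD}(n-1,n)$ spaces $X_k$ with $\mathrm{md}(X_k)\ge\frac{\pi}{2}-\frac1k$, none homeomorphic to $S^n$. Since $\mathrm{diam}(X_k)\le\pi$, Gromov precompactness gives, along a subsequence, measured Gromov--Hausdorff convergence $X_k\to X_\infty$ with $X_\infty$ a compact $\mathrm{RCD}(n-1,n)$ space. Realizing everything in a common compact space, the distances converge uniformly and the normalized measures converge weakly, and since all integrands are bounded by $\pi$ we get $\mathrm{md}(X_k)\to\mathrm{md}(X_\infty)$, so $\mathrm{md}(X_\infty)=\frac{\pi}{2}$. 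A priori $X_\infty$ is only $\mathrm{RCD}(n-1,n)$ and could be collapsed, but the density computation from Part~II used no non-collapsing hypothesis, so it applies to $X_\infty$ and shows $X_\infty$ is non-collapsed and isometric to $S^n$. Hence $X_k\to S^n$ with $\mathcal H^n(X_k)\to\mathcal H^n(S^n)>0$, a non-collapsed convergence to a closed smooth manifold, so by the topological stability of non-collapsed $\mathrm{RCD}$ limits (the $\mathrm{RCD}$ analogue of Cheeger--Colding's almost-rigidity, i.e.\ an ``almost maximal volume'' sphere theorem) $X_k$ is homeomorphic to $S^n$ for all large $k$, a contradiction.

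The elementary engine is the Bishop--Gromov averaging identity for $F$: it gives Part~I outright and reduces Part~II to an equality case of Bishop--Gromov. The two heavy ingredients, which I would cite rather than reprove, are the rigidity that equality in Bishop--Gromov along all radii from ($\mathfrak m$-a.e.) a point forces the round sphere --- via maximal-volume, or maximal-diameter, rigidity for non-collapsed $\mathrm{RCD}(n-1,n)$ spaces --- and the topological stability statement used in Part~III. I expect the main obstacle to be making Part~III airtight: handling the a priori possibility that the Gromov--Hausdorff limit collapses (resolved by the density argument of Part~II) and, more seriously, correctly invoking the topological-stability theorem for non-collapsed $\mathrm{RCD}$ spaces near a smooth manifold; tracking the constant $a$ and the measure convergence in Part~II is the other place to be careful.
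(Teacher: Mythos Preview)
Your Part~I is exactly the paper's argument (layer-cake plus Bishop--Gromov), and your Part~II would work, but the paper takes a shorter path: since it already has the \emph{pointwise} inequality $\mathrm{md}(x)\le\frac{\pi}{2}$, equality in $\mathrm{md}(X)$ forces $\mathrm{md}(x)=\frac{\pi}{2}$ for $\mu_0$-a.e.\ $x$, hence $\mu_0(B_s(x))=V(s)/V(\pi)$ for all $s$ at such $x$, so every such $x$ has an antipode at distance $\pi$; by compactness and full support this upgrades to all $x$, giving $\mathrm{rad}(X)=\pi$, and then Ketterer's maximal-diameter theorem for \emph{non-collapsed} $\mathrm{RCD}(n-1,n)$ spaces yields $X\cong S^n$ directly. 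No density computation, no maximal-volume rigidity, no induction on suspensions is needed, because non-collapsing is already a hypothesis of Part~II.

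The real divergence is in Part~III. The paper does \emph{not} argue by Gromov--Hausdorff compactness. Instead it gives a direct quantitative estimate: if $\mathrm{rad}(X)<\pi-\epsilon_1(n)$ (with $\epsilon_1(n)$ the Honda--Mondello constant), pick $x_0$ realizing the radius; then $\mu_0(B_s(x_0))=1$ for $s\ge\pi-\epsilon_1(n)$, which combined with Bishop--Gromov bounds $\mathrm{md}(x_0)$ strictly below $\frac{\pi}{2}$ by an explicit $\delta(\epsilon_1(n),n)$. Since $D$ is $1$-Lipschitz, the same deficit (with $\epsilon_1(n)/2$) holds on the whole ball $B_{\epsilon_1(n)/2}(x_0)$, whose $\mu_0$-measure is again bounded below by Bishop--Gromov. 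Integrating gives an explicit $\epsilon(n)>0$ with $\mathrm{md}(X)<\frac{\pi}{2}-\epsilon(n)$. Contrapositively, $\mathrm{md}(X)\ge\frac{\pi}{2}-\epsilon(n)$ forces $\mathrm{rad}(X)\ge\pi-\epsilon_1(n)$, and Honda--Mondello finishes. Your compactness route is a legitimate alternative, and your observation that the Part~II density argument needs no non-collapsing hypothesis (so it can be applied to the a~priori possibly collapsed limit $X_\infty$) is exactly the right fix for the obvious gap; but this route leans on heavier structure theory (constant density $\Rightarrow$ $\mathfrak m=a\mathcal H^n$) and on a topological stability theorem you leave unnamed, whereas the paper's argument is elementary, produces an explicit $\epsilon(n)$, and reduces cleanly to a single cited black box (Honda--Mondello). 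If you want to keep your approach, you should name the stability theorem precisely and justify the density $\Rightarrow$ non-collapsed step with a specific reference; otherwise, the direct estimate is both shorter and safer.
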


\begin{remark}
Mean distance is an old and well-studied metric invariant in graph theory, see \cite{MR485476}.  Note that mean distance is also called distance covariance in probability, see \cite{MR3127883}. 
\end{remark}

\begin{remark}
We will use Ketterer's maximal diameter theorem \cite[Theorem~1.4]{MR3333056} and Honda-Mondello's topological sphere theorem \cite[Theorem~A]{zbMATH07417792} (for the metric measure space) to prove Theorem \ref{theorem B}.  We also define a whole family of variants of the concept of mean distance for metric measure spaces  and  prove  sphere theorems for all of them.

\end{remark}

The paper is organized as follows. In Section \ref{section 2},  we introduce the notions of harmonic map with condition C  and prove Theorem \ref{theorem A}. In Section \ref{section 3}, we collect facts about  $\mathrm{RCD}(n-1,n)$ spaces and show Theorem \ref{theorem B}. 

$\mathbf{Acknowledgment}$: The author thanks Thomas Schick for his reading the  draft and helpful suggestions. The author also appreciates the anonymous reviewer's constructive feedback. In particular,  Remarks \ref{Comment 1} and \ref{Comment 2} were suggested by the reviewer. This note is  part of the author's thesis in \cite{1}.  The funding came from a doctoral scholarship of  Mathematisches Institut, Georg-August-Universit{\"a}t, G{\"o}ttingen.

 Finished in May 2021, this note was dedicated to  the $110^{\mathrm{th}}$ anniversary of  the birth of Shiing-Shen Chern.

\section{Harmonic Maps with Condition C}\label{section 2}
Let $S^n$ or $(S^n,g_{st})$ be the standard round $n$-sphere, then one has many rigidity theorems about $S^n$.  We will focus on the rigidity theorem with scalar curvature bounded below.

\begin{definition}
Let $M$ and $N$ be two Riemannian manifolds of the same dimension $n$. A smooth map $f: M\to N$ is said to be $\epsilon$-contracting  if $\|f_*\nu\|\leq \epsilon \|\nu\|$ for all tangent vectors $\nu$ on $M$.

A smooth map $f:M\to N$ is said to be $(\epsilon, \wedge^k)$-contracting if $\|f^*\varphi\|\leq \epsilon\|\varphi\|$, for all $k$-forms $\varphi \in \wedge^k(N)$ and $1\leq k\leq n$.
\end{definition}

Notice that $1$-contracting and volume contracting with contraction constant $1$ means $(1, \wedge^1)$- and $(1, \wedge^n)$-contracting, respectively.  If $f$ is $(1, \wedge^p)$-contracting, then $f$ is $(1, \wedge^{p+k})$-contracting for $k\geq 1$.

\begin{theorem}[Llarull \cite{MR1338312}]
For all $n$, $k$, $D$ with $n\geq k\geq 3$ and $D>0$,  there exists a Riemannian $n$-manifold $M^n$ with scalar curvature$\geq D$ and a $(1, \wedge^k)$-contracting map $f:M^n\to S^n$ of degree $1$. 
\end{theorem}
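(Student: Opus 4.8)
This is an existence (sharpness) statement rather than a rigidity one: it asserts that controlling only $k$-forms with $k\ge3$ is too weak to force any rigidity, in contrast with Llarull's theorem that a degree-one $(1,\wedge^1)$- or $(1,\wedge^2)$-contracting map to $S^n$ from a manifold with $\mathrm{scal}\ge n(n-1)$ must be isometric. I would prove it by an explicit construction of $(M,g)$ and $f$, guided by two necessary features. First, to have $\mathrm{scal}\ge D$ with $D$ arbitrarily large, $M$ should contain a round factor of small radius. Second, since $(1,\wedge^k)$-contracting implies $(1,\wedge^n)$-contracting, i.e.\ volume non-increasing, a degree-one such $f$ forces $\mathrm{Vol}(M)\ge\mathrm{Vol}(S^n)$, so $M$ must at the same time be large in volume; one reconciles the two demands by letting the remaining factor be a round sphere of large radius.

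The plan is to take $M=S^2(\epsilon)\times S^{n-2}(R)$ with the product metric, $S^m(r)$ being the round $m$-sphere of radius $r$, with $\epsilon$ small and $R$ large to be fixed below. Then $\mathrm{scal}_M=\tfrac{2}{\epsilon^2}+\tfrac{(n-2)(n-3)}{R^2}\ge\tfrac{2}{\epsilon^2}$, so any $\epsilon\le\sqrt{2/D}$ gives $\mathrm{scal}_M\ge D$. For $f$ I would take the Thom collapse (smash) map: identifying $S^2=\mathbb{R}^2\cup\{\infty_1\}$, $S^{n-2}=\mathbb{R}^{n-2}\cup\{\infty_2\}$ and $S^n=\mathbb{R}^n\cup\{\infty\}$ by stereographic projection, set $f(u,v)=(u,v)\in\mathbb{R}^2\times\mathbb{R}^{n-2}=\mathbb{R}^n$ on finite points and $f\equiv\infty$ on the collapsed locus $\bigl(\{\infty_1\}\times S^{n-2}\bigr)\cup\bigl(S^2\times\{\infty_2\}\bigr)$. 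This $f$ is a diffeomorphism from a dense open set onto $S^n\setminus\{\infty\}$, hence has degree $1$, and in the conformal coordinates above a short computation shows that off the collapsed locus $\mathrm{d}f$ has exactly two singular values equal to $a:=\dfrac{1+|u|^2}{\epsilon\,(1+|u|^2+|v|^2)}$ and $n-2$ equal to $b:=\dfrac{1+|v|^2}{R\,(1+|u|^2+|v|^2)}$; in particular $a\le\epsilon^{-1}$ and $b\le R^{-1}$ everywhere.

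The contraction estimate then reduces to linear algebra: the operator norm of $f^*$ on $k$-forms at a point is the largest product of $k$ of the singular values of $\mathrm{d}f$ there, and among $\{a,a,b,\dots,b\}$ a short case check bounds this by $\max\{a^2b^{k-2},b^k\}$. Using $1+|u|^2\le1+|u|^2+|v|^2$ and the same bound for $v$ gives $a^2b^{k-2}\le\epsilon^{-2}R^{-(k-2)}$ and $b^k\le R^{-k}$; since $k\ge3$, taking $R:=\epsilon^{-2}$ makes both $\le1$. Thus $f$ is $(1,\wedge^k)$-contracting off the collapsed locus, while on the collapsed locus $\wedge^k\mathrm{d}f=0$ and the inequality is automatic; and this same $R=\epsilon^{-2}$ yields $\mathrm{Vol}(M)\ge\mathrm{Vol}(S^n)$, consistent with the necessary condition. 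This is precisely where $k\ge3$ enters: for $k=2$ the estimate would require $a\le1$, which fails near $v=0$, in harmony with Llarull's rigidity for $(1,\wedge^2)$-contracting maps.

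The one genuine obstacle is that the collapse map is only Lipschitz, not $C^1$, at the single point $(\infty_1,\infty_2)$, so it must be repaired there. I would replace $f$ on a small neighborhood $W$ of $(\infty_1,\infty_2)$ — whose image lies deep inside a small metric ball about $\infty\in S^n$, where $g_{\mathrm{st}}$ is correspondingly tiny — by a smooth interpolation between $f$ and the constant map $\infty$, for instance by scaling the $\infty$-chart expression of $f$ by a cutoff vanishing near $(\infty_1,\infty_2)$; the correction this introduces is controlled because the derivative of the cutoff is compensated by the comparably small radial size of $f$ near $\infty$, so the singular values on $W$ still obey $O(\epsilon^{-1})$ and $O(R^{-1})$ with $\epsilon$-independent constants, and enlarging $R$ to a fixed multiple of $\epsilon^{-2}$ restores the bound $\le1$. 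Since $W$ misses the preimage of any regular value, the degree remains $1$. Verifying that the smoothing preserves all the pointwise estimates uniformly up to the collapsed locus is the only technical point; the scalar-curvature and degree statements are immediate, and the core of the argument is the elementary conformal-coordinate computation of $a$ and $b$.
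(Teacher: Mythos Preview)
The paper does not prove this statement at all: it is quoted as a result of Llarull (reference \cite{MR1338312}) and used only as motivation, to explain why the rigidity theorem that follows is restricted to $(1,\wedge^1)$- and $(1,\wedge^2)$-contracting maps. There is therefore no ``paper's own proof'' to compare against.

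That said, your construction is the right one and is essentially Llarull's. The product $S^2(\epsilon)\times S^{n-2}(R)$ with the smash map to $S^n$ is exactly the standard example; your stereographic computation of the singular values
\[
a=\frac{1+|u|^2}{\epsilon\,(1+|u|^2+|v|^2)},\qquad b=\frac{1+|v|^2}{R\,(1+|u|^2+|v|^2)}
\]
is correct, and the bound on the $\wedge^k$-norm via $\max\{a^2b^{\,k-2},b^{\,k}\}$ together with the choice $R=\epsilon^{-2}$ is clean. Your identification of the single non-smooth point $(\infty_1,\infty_2)$ is also accurate: in antipodal charts the map reads $w'=\bigl(u'|v'|^2,\,v'|u'|^2\bigr)/\bigl(|u'|^2+|v'|^2\bigr)$, which is Lipschitz but not differentiable at the origin.

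The only place your write-up is genuinely sketchy is the smoothing step. The heuristic ``the target metric is tiny near $\infty$'' is correct, but you should make explicit that on the modification region the singular values acquire at worst a fixed multiplicative constant $C$ independent of $\epsilon$, and then absorb $C$ by taking $R=C'\epsilon^{-2}$ for a suitable $C'$; this also avoids the borderline case $a^2b^{\,k-2}=1$ at the origin when $k=3$. With that quantification in place, the argument is complete.
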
  

In particular, a volume decreasing and non-zero degree map for $M^n$ (with scalar  curvature bounded below) into $S^n$ generally will  not be an isometric map.

By confirming and generalizing Gromov's conjecture \cite{MR859578}, Llarull showed the following rigidity theorem using index theoretical method.

\begin{theorem}[Llarull \cite{MR1600027}]
Let $(M^n,g)$ be a closed, connected, Riemannian spin $n$-manifold with scalar curvature$\geq n(n-1)$. Assume that there exists a $1$-contracting map or $(1, \wedge^2)$-contracting map $f: M^n \to S^n$ of non-zero degree, then $f$ is an isometric map.
\end{theorem}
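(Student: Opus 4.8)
The plan is to run the twisted‑Dirac / index argument of Gromov--Lawson and Min--Oo, in the sharp form due to Llarull, so the spin hypothesis is used essentially. Suppose first that $n$ is even. Since $M$ is spin, let $\mathcal{S}\to M$ be its complex spinor bundle. On the target choose a Hermitian bundle with connection $E\to S^{n}$ whose Chern character has non‑trivial component in $H^{n}(S^{n};\mathbb{Q})$ --- concretely one may take $E$ to be a suitable direct summand of the spinor bundle of $S^{n}$, so that $\langle \mathrm{ch}(E),[S^{n}]\rangle\ne 0$. Form the twisted Dirac operator $D$ acting on sections of $\mathcal{S}\otimes f^{*}E$ with the tensor‑product connection $\nabla=\nabla^{\mathcal{S}}\otimes 1+1\otimes f^{*}\nabla^{E}$. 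By Atiyah--Singer,
\[
\mathrm{ind}(D^{+})=\int_{M}\widehat{A}(M)\,\mathrm{ch}(f^{*}E)=\pm\deg(f)\,\langle \mathrm{ch}(E),[S^{n}]\rangle\ne 0 ,
\]
since only the degree‑$0$ part of $\widehat{A}(M)$ and the degree‑$n$ part of $\mathrm{ch}(f^{*}E)$ can contribute. Hence there is a non‑zero twisted harmonic spinor $\phi$, $D\phi=0$.

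Next I would invoke the Schrödinger--Lichnerowicz--Weitzenböck formula for the twisted operator,
\[
D^{2}=\nabla^{*}\nabla+\tfrac14\,\mathrm{scal}_{M}+\mathcal{R}^{E},\qquad \mathcal{R}^{E}=\tfrac12\sum_{i,j}e_{i}\cdot e_{j}\otimes R^{f^{*}E}(e_{i},e_{j}),
\]
pair $D^{2}\phi=0$ with $\phi$, and integrate to get $\int_{M}|\nabla\phi|^{2}+\int_{M}\langle(\tfrac14\,\mathrm{scal}_{M}+\mathcal{R}^{E})\phi,\phi\rangle=0$. The heart of the matter is a pointwise algebraic estimate: fixing $p\in M$ and letting $\lambda_{1},\dots,\lambda_{n}$ be the singular values of $df_{p}$, the round metric on $S^{n}$ has constant curvature $1$, so $R^{f^{*}E}$ is completely explicit and a Clifford‑algebra computation gives $\mathcal{R}^{E}\ge -\tfrac14\sum_{i\ne j}\lambda_{i}\lambda_{j}$ as self‑adjoint endomorphisms at $p$, together with a precise description of the equality eigenspace. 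The $(1,\wedge^{2})$‑contracting hypothesis is exactly $\lambda_{i}\lambda_{j}\le 1$ for all $i<j$ (and $1$‑contracting, i.e. $\lambda_{i}\le 1$, is stronger), whence $\tfrac14\sum_{i\ne j}\lambda_{i}\lambda_{j}\le \tfrac14\,n(n-1)\le \tfrac14\,\mathrm{scal}_{M}$; therefore $\tfrac14\,\mathrm{scal}_{M}+\mathcal{R}^{E}\ge 0$ pointwise, and it is strictly positive at any $p$ where $\mathrm{scal}_{M}(p)>n(n-1)$ or some $\lambda_{i}\lambda_{j}<1$.

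From the integral identity with a non‑negative zeroth‑order term I conclude $\nabla\phi\equiv 0$ and $(\tfrac14\,\mathrm{scal}_{M}+\mathcal{R}^{E})\phi\equiv 0$. Being parallel, $\phi$ has constant norm and is therefore nowhere zero, so $\tfrac14\,\mathrm{scal}_{M}+\mathcal{R}^{E}$ must be singular at every point; by the equality analysis this forces $\mathrm{scal}_{M}\equiv n(n-1)$ and $\lambda_{i}\lambda_{j}=1$ for all $i<j$, hence (for $n\ge 3$, and automatically in the $1$‑contracting case) every $\lambda_{i}=1$, i.e. $df_{p}$ is a linear isometry at each $p$. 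Finally, a local isometry of a closed connected manifold onto $S^{n}$ is a Riemannian covering map; since $S^{n}$ is simply connected and $\deg f\ne 0$, this covering is one‑sheeted, so $f$ is a diffeomorphism and therefore a global isometry. For odd $n$ one reduces to the even case by a standard device --- e.g. passing to $M\times S^{1}\to S^{n}\times S^{1}$ and then to $S^{n+1}$, or using a mod‑$2$ index with a doubled twisting bundle. I expect the genuine obstacle to be the sharp pointwise lower bound on $\mathcal{R}^{E}$ together with its rigidity (equality) statement, where the precise curvature of the round sphere and the contracting condition must be combined exactly; the index computation and the passage from infinitesimal to global isometry are comparatively routine.
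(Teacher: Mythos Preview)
The paper does not contain its own proof of this statement: Llarull's theorem is quoted purely as background, with a citation to the original source, and the paper's contribution in this section is the separate Theorem~A, proved by the Chern--Goldberg harmonic-map Bochner argument under the additional hypotheses that $f$ is harmonic with condition $C\le 0$. So there is nothing to compare your proposal against inside the paper.

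That said, your sketch is an accurate outline of Llarull's original index-theoretic proof: the twisted Dirac operator on $\mathcal{S}\otimes f^{*}E$ with $E$ a half-spinor bundle of $S^{n}$, the index computation via $\widehat{A}\cdot\mathrm{ch}$, the Lichnerowicz formula, the pointwise Clifford-algebra estimate $\mathcal{R}^{E}\ge -\tfrac14\sum_{i\ne j}\lambda_i\lambda_j$ in terms of the singular values of $df$, and the rigidity step forcing $\lambda_i\lambda_j\equiv 1$. The odd-dimensional reduction by crossing with $S^{1}$ is also Llarull's device. One small caveat worth flagging in the equality analysis: from $\lambda_i\lambda_j=1$ for all $i\ne j$ you correctly need $n\ge 3$ to deduce all $\lambda_i=1$; for $n=2$ in the $(1,\wedge^2)$-contracting case a separate (elementary) argument is required, and you should not leave that unaddressed if you intend this as a full proof.
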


Note that the spin condition on $M^n$ can be relaxed to require that the map $f$ is the spin map.  $S^n$ can be replaced by the Riemannian manifold, which  satisfies a non-negative curvature operator and certain harmonic spinor conditions \cite[Lemma~1.1]{MR1877585}. Llarull's rigidity theorem  can also  be generalized to the weighted rigidity theorem on weighted Riemannian manifold with positive weighted scalar curvature in \cite[Proposition~4.21]{MR4210894}.

\begin{remark}
It is natural to appeal to use  Schoen-Yau's minimal surface method \cite{MR541332}  to give another proof of Llarull's rigidity theorem.  Since Gromov-Lawson's index theoretical approach \cite{MR720933} and Schoen-Yau's minimal surface method are  two of the fundamental methods of studying  scalar curvature, one can  try to apply one approach to give a  proof of results that has been  showed by the other.

\end{remark}

Here we will use  harmonic maps to approach the rigidity problem on scalar curvature. Furthermore, our argument can be applied to the rigidity question of the arbitrary  Einstein manifolds.

We follow the setup and notations in \cite{MR404698}. Let $M$ and $N$ be Riemannian $n$-manifolds.  Let $ds^2_M$ and $ds^2_N$ be the Riemannian metrics of $M$ and $N$, respectively. Then we can write, locally,
\begin{equation*}
ds^2_M=\omega^2_1+ \cdots + \omega^2_n, \quad   ds^2_N= \omega^{*2}_1+ \cdots +\omega^{*2}_n,
\end{equation*}
where $\omega_i$ $(1\leq i\leq n)$ and $\omega^*_a$ $(1\leq a \leq n)$ are linear differential forms in $M$ and $N$, respectively. The structure equations in $M$ are 
\begin{align*}
&d\omega_i=\sum\limits_{j} \omega_j\wedge \omega_{ji}  \\
&d\omega_{ij}= \sum\limits_{k}\omega_{ik}\wedge \omega_{kj} - \frac{1}{2}\sum\limits_{k,l}R_{ijkl}\omega_k\wedge\omega_l.
\end{align*}
The Ricci tensor $R_{ij}$ is defined as
\begin{equation*}
R_{ij}=\sum\limits_{k}R_{ikjk}
\end{equation*}
and the scalar curvature is defined as
\begin{equation*}
R=\sum\limits_{i}R_{ii}.
\end{equation*}
Similar equations are valid in $N$ and  the corresponding quantities  are denoted in the same notation with asterisks.

Let $f$ be a $C^{\infty}$-mapping of $M$ into $N$ and 
\begin{equation*}
f^*\omega^*_a=\sum\limits_{i}A^a_i\omega_i.
\end{equation*}
Later, we will drop $f^*$ from such formulas when its presence is clear in the context.

Let $e_i$ (resp. $e^*_a$) be a frame that is dual to the coframe $\omega_i$ (resp. $\omega^*_a$), then we have 
\begin{equation*}
f_*e_i=\sum\limits_{a}A^a_ie^*_a.
\end{equation*}

The covariant differential of $A^a_i$ is defined by
\begin{equation}\label{1}
DA^a_i:= dA^a_i+ \sum\limits_{j}A^a_j\omega_{ji}+\sum\limits_{b}A^b_i\omega^*_{ba}:= \sum\limits_jA^a_{ij}\omega_j  
\end{equation}
with
\begin{equation}\label{1.5}
 A^a_{ij}=A^a_{ji}
\end{equation}

The following geometrical interpretation of $A^a_{ij}$ was given by Chern-Goldberg \cite[P.136]{MR367860}: Let $x\in M$ and let $T_x$ and $T_{f(x)}$ be the tangent spaces at $x$ and $f(x)$, respectively. The mapping 
\begin{equation*}
f_{**}: T_x\to T_{f(x)}
\end{equation*}
defined by 
\begin{equation*}
f_{**}(\nu)=\sum\limits_{a,i,j}A^a_{ij}\lambda_i\lambda_je^*_a, \quad \nu=\sum\limits_{i}\lambda_ie_i
\end{equation*}
is quadratic.  The mapping has the property that if $\nu$ is a unit vector, $f_{**}(\nu)$ is the acceleration vector of $f(\gamma)$ at $f(x)$, where $\gamma$ is the geodesic tangent to $\nu$ at $x$. 

\begin{definition}[Harmonic maps]
The mapping $f$ is called harmonic if 
\begin{equation*}
\sum\limits_{i}A^a_{ii}=0.
\end{equation*}
\end{definition}

Notice that the tensor field with the components $\sum\limits_{i}A^a_{ii}$ is the tension vector field of Eells-Sampson \cite{MR164306}.  Eells and Sampson  proved that the mapping $f$ is harmonic (in Eells-Sampson's sense) if and only if $\sum\limits_{i}A^a_{ii}=0$.

Differentiating the equation \eqref{1} and using the structure equations in $M$ and $N$, we get 
\begin{equation}\label{2}
\sum\limits_{j}DA^a_{ij}\wedge\omega_j=-\frac{1}{2}\sum\limits_{j,k,l}A^a_lR_{jikl}\omega_k\wedge\omega_l 
- \frac{1}{2}\sum\limits_{b,c,d}A^b_iR^*_{bacd}\omega^*_c\wedge\omega^*_d,
\end{equation}
where 
\begin{equation*}
DA^a_{ij}:= dA^a_{ij}+\sum_bA^b_{ij}\omega^*_{ba}+\sum\limits_kA^a_{kj}\omega_{ki}+ \sum\limits_{k}A^a_{ik}\omega_{kj}:= \sum\limits_kA^a_{ijk}\omega_k.
\end{equation*}

Thus, we have
\begin{equation}\label{3}
A^a_{ijk}-A^a_{ikj}=-\sum\limits_{l}A^a_lR_{likj}-\sum\limits_{b,c,d}A^b_iA^c_kA^d_jR^*_{bacd}.
\end{equation}

According to \eqref{1.5} and \eqref{3}, we can calculate the Laplacian
\begin{equation}\label{3.5}
\Delta A^a_i:=\sum\limits_kA^a_{ikk}=\sum\limits_kA^a_{kik}=\sum\limits_kA^a_{kki}+\sum\limits_lA^a_lR_{li}-\sum\limits_{b,c,d,k}R^*_{bacd}A^b_kA^c_kA^d_i.
\end{equation}

The ration  $ A:=\frac{f^*d\nu_N}{d\nu_M}$ of volume elements has the expression $A=\mathrm{det}(A^a_i)$. Let $(B^i_a)$ be the adjoint of $(A^a_j)$, i.e., $\sum\limits_aB^i_aA^a_j=\delta^i_jA$. Then, by \eqref{1},
\begin{equation*}
dA=\sum\limits_{i,a}B^i_adA^a_i=\sum\limits_{i,j,a}B^i_adA^a_{ij}\omega_j =: \sum\limits_j \mathbb{A}_j\omega_j
\end{equation*}

Let $V=A^2$, then we will compute the  Laplacian of $V$ and  use it to prove Theorem \ref{theorem A}.

First, let
\begin{equation}\label{4}
dV=\sum\limits_kV_k\omega_k,
\end{equation}
where
\begin{equation}\label{5}
V_k=2A\sum\limits_{i,a}B^i_aA^a_{ik}.
\end{equation}

Second, exterior differentiation of \eqref{4} gives
\begin{equation*}
\sum\limits_k(dV_k-\sum\limits_i V_i \omega_{ki})\wedge \omega_k =0.
\end{equation*}

Hence we can write
\begin{equation}\label{6}
dV_k-\sum\limits_i V_i\omega_{ki} =\sum\limits_jV_{kj}\omega_j,
\end{equation}
where
\begin{equation*}
V_{jk}=V_{kj}.
\end{equation*}

Then the Laplacian of $V$ is by definition equal to 
\begin{equation*}
\Delta V=\sum\limits_kV_{kk}.
\end{equation*}

Third, by differentiating \eqref{5}, using \eqref{6}, and simplifying, we get
\begin{equation*}
\frac{1}{2}V_{kj}=2\mathbb{A}_j\mathbb{A}_k-\sum\limits_{i,l,a,b}B_a^iB_b^lA_{lj}^aA_{ik}^b+A\sum\limits_{i,a}B_a^iA_{ikj}^a.
\end{equation*}

Using \eqref{3.5},  the  Laplacian of $V$ can be given by 
\begin{equation}\label{8}
\frac{1}{2}\Delta V= 2\sum\limits_j(\mathbb{A}_j)^2+V(R-\sum\limits_{b,c,j}R^*_{b,c}A^b_jA^c_j)-C+A\sum\limits_{a,i,j}B^i_aA^a_{jji},
\end{equation}
where $C$ is a scalar invariant of the mapping given by
\begin{equation}\label{c} 
C=\sum\limits_{\mathop{a,b}\limits_{i,j,k}}B^i_aB^k_bA^a_{kj}A^b_{ij}.
\end{equation}
If $f$ is harmonic, then the last term of \eqref{8} vanishes. That is, see \cite[P.141,~formula (57)]{MR367860},
\begin{equation}
\begin{aligned}
&\text{(for a harmonic map)}\\
\frac{1}{2}\Delta V= 2\sum\limits_j(\mathbb{A}_j)^2+V(R-\sum\limits_{b,c,j}R^*_{b,c}A^b_jA^c_j)-C.
\end{aligned}
\end{equation}

The geometric meaning of Condition C \eqref{c} was also given by Chern-Goldberg  \cite[P.142,~Remark]{MR367860} as follows:  the scalar C may be interpreted geometrically as a weighted measure of the deviation of the square length of the tensor $C_{ijk}$ from the square length of its symmetric part, where $C_{ijk}$ is the pullback of $A^a_{ij}$ under $f$.

\begin{proof}[Proof of Theorem \ref{theorem A}]
From the fact that the round sphere $S^n$ is an Einstein manifold, we get
\begin{equation*}
\sum\limits_{b,c,j}R^*_{b,c}A^b_jA^c_j=\frac{R^*}{n}\sum\limits_{a,i}(A^a_i)^2
\end{equation*}

Since $M$ is compact and $f$ is non-zero degree,  $V$ does attain its maximum at the point $x$ in $M$.  Then  $V(x)>0$ and $\Delta V(x)\leq 0$. Notice that $V(x)$ is independent of the choice of the frame and coframe.  At the point $x$, we choose a local $g$-orthonormal frame $e_1, \dots, e_n$ on $T_xM$ and a local $g_{st}$-orthonormal frame $e^*_1,\dots, e^*_n$ on $T_{f(x)}S^n$, such that there exists $\lambda_1\geq \lambda_2\geq\cdots \geq \lambda_n>0$, with $f_*e_i=\lambda_ie^*_i$. This can be done by diagonalizing $f^*g_{st}$ with respect to the metric $g$. As $f$ is $(1, \wedge^1)$-contracting, we have $\lambda_i\leq 1$  for all $1\leq i\leq n$. That means 
\begin{equation}\label{9}
\sum\limits_{b,c,j}R^*_{b,c}A^b_jA^c_j=\frac{R^*}{n}\sum\limits_{a,i}(A^a_i)^2\leq R^*.
\end{equation}
 
On account of $R^*=n(n-1)$, $R\geq n(n-1)$, and $C\leq 0$, we have
\begin{equation*}
\frac{1}{2}\Delta V(x)\geq V(x)(R-n(n-1))\geq 0.
\end{equation*}
Then by combining $V(x)>0$ and $\Delta V(x)\leq 0$, we get $R=n(n-1)$ and $\lambda_i=1$ for all $1\leq i\leq n$. That means $f$ is an isometric map.
\end{proof}

In fact, the target of the map can be relaxed to an arbitrary Einstein manifold as \eqref{9} shows.

\begin{corollary}
Let $(N,g)$ be an orientable closed (Riemannian) Einstein $n$-manifold with scalar curvature $R^*$ and $(M,g)$ be an orientable closed Riemannian  $n$-manifold with scalar curvature $R$. 

 Suppose  $R\geq R^*$,  there exists a $(1, \wedge^1)$-contracting map $f: M \to N$ of non-zero degree, and the map $f$ is harmonic with condition $C\leq 0$, then $f$ is a locally isometric map. If the fundamental group of $N$ is trivial, then $f$ is an isometric map.
\end{corollary}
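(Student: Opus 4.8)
The plan is to rerun the proof of Theorem~\ref{theorem A} almost word for word, since the only property of the target $S^n$ used there was that it is Einstein, entering through \eqref{9}. So first I would record that $N$ being Einstein means $R^*_{ab}=\tfrac{R^*}{n}\delta_{ab}$, hence $\sum_{b,c,j}R^*_{b,c}A^b_jA^c_j=\tfrac{R^*}{n}\sum_{a,i}(A^a_i)^2$ at every point of $M$. Next, at an arbitrary $p\in M$ I diagonalize $f^*g$ with respect to $g$: there are a $g$-orthonormal frame $e_1,\dots,e_n$ on $T_pM$, a $g$-orthonormal frame $e^*_1,\dots,e^*_n$ on $T_{f(p)}N$, and $\lambda_1\ge\cdots\ge\lambda_n\ge 0$ with $f_*e_i=\lambda_i e^*_i$. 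The $(1,\wedge^1)$-contracting hypothesis forces $\lambda_i\le 1$, so $\sum_{a,i}(A^a_i)^2=\sum_i\lambda_i^2\le n$ and hence $\sum_{b,c,j}R^*_{b,c}A^b_jA^c_j\le R^*$. This last step uses $R^*\ge 0$, so the statement really needs $R^*>0$ (for a flat target, a homothety between tori is a harmonic, $C=0$, $(1,\wedge^1)$-contracting, degree-one map that is not a local isometry), which I would add to the hypotheses.

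With this in hand I substitute into the harmonic-map Laplacian identity (the display just after \eqref{8}): since $f$ is harmonic,
\begin{equation*}
\tfrac12\Delta V=2\sum_j(\mathbb{A}_j)^2+V\Bigl(R-\sum_{b,c,j}R^*_{b,c}A^b_jA^c_j\Bigr)-C\ge V(R-R^*)\ge 0,
\end{equation*}
using $C\le 0$, $V=A^2\ge 0$, and $R\ge R^*$. Thus $V$ is subharmonic on the closed (connected) manifold $M$, hence constant; and since $f$ has non-zero degree, $df$ is invertible at some point (by Sard's theorem), so $V=(\det df)^2$ is not identically zero and therefore $V\equiv c>0$.

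Then $\Delta V\equiv 0$ forces every non-negative summand in the identity to vanish, in particular $V\bigl(R-\tfrac{R^*}{n}\sum_i\lambda_i^2\bigr)\equiv 0$, so $R\equiv\tfrac{R^*}{n}\sum_i\lambda_i^2$ pointwise. Feeding this into $R\ge R^*\ge\tfrac{R^*}{n}\sum_i\lambda_i^2$ and using $R^*>0$ gives $\sum_i\lambda_i^2\equiv n$ with each $\lambda_i\le 1$, hence $\lambda_i\equiv 1$ everywhere, i.e.\ $f^*g=g$, so $f$ is a local isometry. Finally, a local isometry out of a compact (hence complete) Riemannian manifold into a connected Riemannian manifold is a Riemannian covering map, whose number of sheets equals $|\deg f|\neq 0$; if $\pi_1(N)$ is trivial this covering is one-sheeted, so $f$ is a global isometry.

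The argument is a routine transcription, so there is no deep obstacle; the only points requiring care are (i) the hidden positivity assumption $R^*>0$, without which the inequality $\sum_{b,c,j}R^*_{b,c}A^b_jA^c_j\le R^*$, and with it the rigidity, breaks down, and (ii) the passage from local to global isometry, which rests on the standard fact that a local isometry with complete domain is a covering map, together with connectedness of $N$ (implicit in "closed manifold").
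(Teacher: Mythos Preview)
Your approach is exactly the paper's: rerun the Theorem~A argument, noting that the only property of the target used was the Einstein condition entering through \eqref{9}. The paper's own proof is a single sentence to this effect.

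You go beyond the paper in two ways, both correctly. First, you spell out the passage from $\lambda_i\equiv 1$ to a global isometry via the covering-map argument when $\pi_1(N)=1$, which the paper omits. Second, and more substantively, you observe that the step $\tfrac{R^*}{n}\sum_i\lambda_i^2\le R^*$ (deduced from $\sum_i\lambda_i^2\le n$) needs $R^*\ge 0$, and that rigidity genuinely fails otherwise: your flat-torus homothety is a valid counterexample to the corollary as stated, and the same construction with a closed hyperbolic target handles $R^*<0$. This is a gap in the paper's statement, not in your argument; with the added hypothesis $R^*>0$ your proof is complete. Your use of global subharmonicity of $V$ (hence $V$ constant on the closed manifold) is also cleaner than the paper's maximum-point analysis, which establishes $\lambda_i=1$ only at the maximizer and then asserts the global conclusion without further justification.
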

\begin{proof}
As $N$ is an Einstein manifold,  then the inequality \eqref{9} and $R\geq R^*$ implies  $f$ is a locally isometric map.
\end{proof}

It is not clear whether the method of the poof in this section  can be generalized to weighted Riemannian manifold with positive weighted scalar curvature in \cite{MR4210894}.

\begin{conjecture}\label{conjecture}
Let $(M,g)$ be an orientable closed  Riemannian $n$-manifold with scalar curvature$\geq n(n-1)$.  Suppose that  there exists a $(1, \wedge^1)$-contracting map $f: M \to S^n$ of non-zero degree, then $f$ is an isometric map.
\end{conjecture}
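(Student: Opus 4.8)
\medskip
\noindent\textbf{A proof proposal.}
The assertion is exactly Llarull's rigidity theorem with the spin hypothesis deleted, and one should expect it to be genuinely hard: every known proof of Llarull's theorem rests on the spin Dirac operator, and the harmonic-map argument of this section is, by itself, very unlikely to close the gap. Indeed, in the diagonalizing frame $A^a_i=\lambda_i\delta^a_i$ of the proof of Theorem~\ref{theorem A} one has $C=A^2\sum_{i,j,k}\lambda_i^{-1}\lambda_k^{-1}A^i_{kj}A^k_{ij}$, which is $\ge 0$ whenever the tensor $A^a_{ij}$ is totally symmetric in its three indices; and integrating \eqref{8} over $M$ for harmonic $f$ gives $\int_M C\,d\nu_M=\int_M\bigl(2\sum_j(\mathbb{A}_j)^2+V\bigl(R-\tfrac{R^*}{n}\sum_{a,i}(A^a_i)^2\bigr)\bigr)\,d\nu_M\ge 0$ under the hypotheses of Theorem~\ref{theorem A}. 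So $C\le 0$ is a real constraint rather than an artifact, and it is unclear that it can be arranged by replacing $f$ with a harmonic map in its homotopy class — such a representative of prescribed non-zero degree is itself not easy to produce, and there is no reason it should satisfy $C\le 0$. I would therefore abandon both the Dirac operator and Condition~C, and attempt a dimension reduction by stable minimal hypersurfaces / $\mu$-bubbles in the spirit of Schoen--Yau and Gromov.

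Concretely, realize $S^n\subset\mathbb{R}^{n+1}$ with coordinates $(y_1,\dots,y_{n+1})$, and set $u=y_{n+1}\circ f\colon M\to[-1,1]$ and $\bar f=(y_1,\dots,y_n)\circ f$; since $f$ is $1$-Lipschitz and $|\nabla y_{n+1}|^2=1-y_{n+1}^2$ on $S^n$, one has $|\nabla u|\le\sqrt{1-u^2}$ and $|\bar f|^2=1-u^2$. First, a warped $\mu$-bubble minimization between the caps $\{u\ge 1-\delta\}$ and $\{u\le -1+\delta\}$, with warping modeled on the cosine of the spherical arc-distance, produces a closed hypersurface $\Sigma^{n-1}\subset M$ missing both caps whose induced metric, after the conformal change dictated by the first eigenfunction of the stability operator, has $\mathrm{Scal}_\Sigma\ge (n-1)(n-2)$; this is the usual Gromov computation and it is here that $\mathrm{Scal}_M\ge n(n-1)$ is spent. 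Second, $\Sigma$ is homologous to the $f$-preimage of a regular great $S^{n-1}$, so the normalization $g:=\bar f/|\bar f|\colon\Sigma\to S^{n-1}$ has degree $\pm\deg f\ne 0$, and one checks — up to an error that vanishes in the limit — that $g$ is $1$-Lipschitz. This reduces the statement in dimension $n$ to the same statement in dimension $n-1$; iterating down to $n=2$, the base case is Gauss--Bonnet: $\mathrm{Scal}\ge 2$ forces $\chi(\Sigma)=2$ and $\mathrm{Area}(\Sigma)\le\int_\Sigma K=4\pi$, while a $1$-Lipschitz map to $S^2$ of degree $d$ forces $\mathrm{Area}(\Sigma)\ge 4\pi|d|\ge 4\pi$, whence $K\equiv 1$ and $\Sigma\cong S^2$ isometrically.

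The rigidity is then extracted from the equality case of each comparison used on the way down: the extremal $\mu$-bubble must be totally geodesic, the stability eigenfunction constant, the warping saturated, $u$ a genuine height function (so $|\nabla u|^2=1-u^2$), and $\Sigma\cong S^{n-1}$ by induction; feeding this back reconstructs the warped product $dt^2+\cos^2 t\,g_{S^{n-1}}$ on $M$, i.e.\ $M=S^n$ and $f$ an isometry. I expect two real obstacles. The serious one is dimensional: $\mu$-bubbles and stable minimal hypersurfaces are regular only for $n\le 7$ (or slightly higher with Chodosh--Li-type generic regularity), so the conjecture \emph{for all $n$} would require either new regularity input or a spinor-free index-theoretic argument that genuinely avoids spin — which is the actual open problem. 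The second, already present in low dimensions, is propagating the rigidity cleanly through the non-smooth $\mu$-bubble and through the conformal change; this is delicate but I would expect it to go through.
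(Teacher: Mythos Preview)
The statement you are addressing is not a theorem in the paper but a \emph{conjecture}; the paper offers no proof of it. Theorem~A is the strictly weaker result that assumes in addition that $f$ is harmonic with $C\le 0$, and the Conjecture is explicitly left open (the Remark immediately following it begins ``If conjecture~\ref{conjecture} is confirmed, then\dots''). So there is no ``paper's own proof'' to compare your proposal against.

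You clearly recognize this: your text calls the problem ``genuinely hard,'' explains why the Chern--Goldberg computation of Section~\ref{section 2} cannot be pushed to cover it, and then sketches a Schoen--Yau/Gromov $\mu$-bubble dimension-reduction program rather than a proof. That is a reasonable research outline, and you correctly name its principal obstacles --- regularity of stable minimal hypersurfaces and $\mu$-bubbles fails for $n\ge 8$, and the rigidity must be propagated through each reduction step and through the conformal change. One further point worth flagging in your sketch: the assertion that $g=\bar f/|\bar f|$ restricted to the bubble $\Sigma$ is (approximately) $1$-Lipschitz does not follow immediately from $f$ being $1$-Lipschitz, since the radial normalization $v\mapsto v/|v|$ expands by the factor $1/|v|=(1-u^2)^{-1/2}$ and one must control the angle between $T\Sigma$ and the level sets of $u$; this is where the warping and stability equalities must already do work, not only at the rigidity stage. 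In any case, as you yourself say, even a successful execution of this program would settle the conjecture only in low dimensions.
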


\begin{remark}
If  conjecture \ref{conjecture} is confirmed, then Theorem B in \cite{MR4291609} holds for all dimensions. And then we can get a new obstruction of the existence of a Riemannian metric with positive scalar curvature on an arbitrary  closed smooth manifold.

\end{remark}

\section{Non-collapsed RCD Spaces and Mean Distance} \label{section 3}

The framework of Riemannian manifolds with Ricci curvature bounded below is   generalized to the metric measure space $(X,d,\mu)$, which  satisfies Riemannian curvature-dimension condition. And  the metric measure spaces are called $\mathrm{RCD}(K,N)$ spaces.

\begin{definition}[Non-collapsed   $\mathrm{RCD}(K,n)$ spaces ]
Let $K\in \mathbb{R}$, $n\in \mathbb{N}$ and $n\geq 1$.  The metric measure space $(X,d,\mu)$ is called  a  non-collapsed  $\mathrm{RCD}(K,n)$ space, if 
\begin{enumerate}
\item[1.]  $(X,d,\mu)$ is an $\mathrm{RCD}(K,n)$ space.
\item[2.] [Non-collapsed condition] $\mu=\mathcal{H}^n$, where $\mathcal{H}^n$ is the $n$-dimensional Hausdorff measure with respect to the metric $d$.
\end{enumerate}

\end{definition}

The  concept of non-collapsed   $\mathrm{RCD}(K,N)$ space  was defined by De Philippis and Gigli \cite[Definition~1.1]{MR3852263}.   De Philippis-Gigli showed that $N$ must be an integer.  The proof of our theorem  below is mainly based on  De Philippis and Gigli's results.

\begin{definition}[Mean distance]
Let $(X,d, \mu)$ be a compact metric measure space with full support, then the mean distance of $X$ is defined as
\begin{equation*}
\mathrm{md}(X):=\int\limits_{X\times X} \frac{ d(\cdot, \cdot)}{\mu(X)\times \mu(X)} \mu\otimes \mu  
\end{equation*}
\end{definition}

 Using Cheng's maximal diameter theorem and Cheeger-Colding's differentiable  sphere theorem \cite{MR1484888},  Kokkendorff \cite[Theorem~4]{MR2474426} showed the following theorem.
\begin{theorem}[Kokkendorff]
Let $(M, g, d\mathrm{Vol}_g)$ be a Riemannian $n$-manifold $(n\geq 2)$ with Ricci curvature$\geq n-1$, where $ d\mathrm{Vol}_g$ is the Riemannian volume,  then
\begin{enumerate}
\item[1.] $\mathrm{md}(M)\leq \frac{\pi}{2}$ 
\item[2.] $\mathrm{md}(M)= \frac{\pi}{2}$  if and only if $M$ is isometric to the standard round $n$-sphere $(S^n,  d_{S^n})$.
\item[3.] there is an $\epsilon'(n)>0$ such that $\mathrm{md}(M)\geq \frac{\pi}{2}-\epsilon'(n)$ implies that $M$ is diffeomorphic to  $S^n$.
\end{enumerate} 
 
\end{theorem}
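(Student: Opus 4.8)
The plan is to rewrite the mean distance in terms of the radial volume function of $M$ and to compare it pointwise with that of the round sphere via the Bishop--Gromov inequality. Fix $x\in M$; by Fubini $\mathrm{md}(M)=\frac{1}{\mathrm{Vol}(M)}\int_M\Phi(x)\,d\mathrm{Vol}_g(x)$, where $\Phi(x):=\frac{1}{\mathrm{Vol}(M)}\int_M d(x,\cdot)\,d\mathrm{Vol}_g=\int_0^{\pi}\bigl(1-\mathrm{Vol}(B(x,r))/\mathrm{Vol}(M)\bigr)\,dr$ is the mean distance from $x$ (the integrand vanishes for $r\ge\mathrm{diam}(M)$, and $\mathrm{diam}(M)\le\pi$ by Bonnet--Myers). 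Since $\mathrm{Vol}(B(x,\pi))=\mathrm{Vol}(M)$, the Bishop--Gromov inequality gives $\mathrm{Vol}(B(x,r))/\mathrm{Vol}(M)\ge F(r)$ for all $r\in[0,\pi]$, where $F(r):=\int_0^r\sin^{n-1}t\,dt\big/\int_0^\pi\sin^{n-1}t\,dt$ is exactly the normalized radial volume function of $S^n$. Hence $\Phi(x)\le\int_0^\pi(1-F(r))\,dr$, and since $\sin^{n-1}(\pi-t)=\sin^{n-1}t$ this integral equals $\frac\pi2$ (it is the mean distance from a point of $S^n$, whose distance distribution is symmetric about $\frac\pi2$). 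Integrating in $x$ proves \textup{I}, and the case $M=S^n$ gives $\mathrm{md}(S^n)=\frac\pi2$.

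For \textup{II}, suppose $\mathrm{md}(M)=\frac\pi2$. As $\Phi(x)\le\frac\pi2$ for every $x$ and $\Phi$ is continuous on the connected manifold $M$, we get $\Phi\equiv\frac\pi2$; then the pointwise inequality $1-\mathrm{Vol}(B(x,r))/\mathrm{Vol}(M)\le1-F(r)$ must be an equality for a.e.\ $r$, so $\mathrm{Vol}(B(x,r))/\mathrm{Vol}(M)=F(r)<1$ for all $r<\pi$ and all $x$. Thus each $x$ has points at distances arbitrarily close to $\pi$, i.e.\ $\mathrm{diam}(M)=\pi$, and Cheng's maximal diameter theorem forces $M$ to be isometric to the round $S^n$; the converse is the computation already made.

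For \textup{III} I would argue by contradiction: if no $\epsilon'(n)$ works, pick closed $n$-manifolds $M_i$ with $\mathrm{Ric}\ge n-1$ and $\mathrm{md}(M_i)\to\frac\pi2$, none diffeomorphic to $S^n$. By Myers all $\mathrm{diam}(M_i)\le\pi$, so Gromov precompactness together with weak subsequential convergence of the normalized volume measures gives a measured Gromov--Hausdorff limit $(X,d,\mu)$, necessarily an $\mathrm{RCD}(n-1,n)$ space. Mean distance is continuous under measured Gromov--Hausdorff convergence once the diameters are uniformly bounded, so $\mathrm{md}(X)=\frac\pi2$. Rerunning the first two paragraphs inside $X$ (Bishop--Gromov holds for $\mathrm{CD}(n-1,n)$) forces $\mathrm{diam}(X)=\pi$, Ketterer's maximal diameter theorem \cite{MR3333056} exhibits $X$ as a spherical suspension, and feeding the equality $\mu(B(x,r))/\mu(X)=F(r)$ at $\mu$-a.e.\ $x$ back through that structure (inducting on the cross-section) identifies $X$ with the round $S^n$. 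Hence $d_{GH}(M_i,S^n)\to0$, so Cheeger--Colding's differentiable sphere theorem \cite{MR1484888} makes $M_i$ diffeomorphic to $S^n$ for large $i$, a contradiction. The routine ingredients here are Bonnet--Myers, Bishop--Gromov, Cheng's theorem and the Gromov--Hausdorff continuity of $\mathrm{md}$; the main obstacle is the last rigidity step, namely promoting ``$\mathrm{md}(X)=\frac\pi2$'' on the possibly collapsed limit from ``spherical suspension'' to ``round sphere'': Ketterer's theorem leaves the cross-section $Y$ undetermined, and forcing $Y$ to be a round $S^{n-1}$ from the full-measure matching of radial volume functions should require an induction on dimension together with some care in the collapsed case where $\mu\ne\mathcal H^n$.
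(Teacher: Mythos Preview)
Your arguments for parts I and II are correct and coincide with the approach the paper attributes to Kokkendorff (and carries out in detail for the $\mathrm{RCD}$ generalisation, Theorem~B): the layer-cake representation of $\Phi(x)$, Bishop--Gromov, the symmetry of $\sin^{n-1}$ giving $\mathrm{md}(S^n)=\tfrac\pi2$, and Cheng's maximal diameter theorem for the equality case.

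For part III your route diverges from the paper's. You argue by Gromov compactness, passing to a measured Gromov--Hausdorff limit $(X,d,\mu)$ with $\mathrm{md}(X)=\tfrac\pi2$, and then try to identify $X$ with $S^n$. As you yourself flag, the genuine obstacle is that the limit may be collapsed ($\mu\ne\mathcal H^n$): Ketterer's theorem then only yields a spherical suspension over some $\mathrm{RCD}(n-2,n-1)$ cross-section, and pinning the cross-section down to the round $S^{n-1}$ is not immediate. Your proposed induction on dimension is plausible but is real additional work, and as written the proof is incomplete at exactly this point.

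The paper (following Kokkendorff) sidesteps this entirely with a direct quantitative estimate, never passing to a limit. The eccentricity $\mathrm D(x)=\sup_y d(x,y)$ is $1$-Lipschitz; if $\mathrm{rad}(M)=\mathrm D(x_0)<\pi-\epsilon_1$, then every $y\in B_{\epsilon_1/2}(x_0)$ has $\mathrm D(y)<\pi-\tfrac{\epsilon_1}{2}$, and the same layer-cake formula together with Bishop--Gromov give
\[
\Phi(y)\le \frac\pi2-\delta\bigl(\tfrac{\epsilon_1}{2},n\bigr),\qquad
\delta(s,n):=\int_{\pi-s}^{\pi}\bigl(1-F(r)\bigr)\,dr>0.
\]
Averaging over $M$ and using Bishop--Gromov once more to bound $\mu_0\bigl(B_{\epsilon_1/2}(x_0)\bigr)\ge F(\tfrac{\epsilon_1}{2})$ from below yields
\[
\mathrm{md}(M)<\frac\pi2-F\bigl(\tfrac{\epsilon_1}{2}\bigr)\,\delta\bigl(\tfrac{\epsilon_1}{2},n\bigr)=:\frac\pi2-\epsilon'(n).
\]
Contraposing, $\mathrm{md}(M)\ge\tfrac\pi2-\epsilon'(n)$ forces $\mathrm{rad}(M)\ge\pi-\epsilon_1$, and then Cheeger--Colding's differentiable sphere theorem (via Colding's equivalence of large radius and almost-maximal volume under $\mathrm{Ric}\ge n-1$) finishes. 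This produces an explicit $\epsilon'(n)$ and avoids any discussion of collapsed limits; your compactness route can presumably be completed, but it is strictly more work than the problem requires.
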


We  generalize Kokkendorff's theorem to  compact non-collapsed  $\mathrm{RCD}(n-1,n)$ spaces.

The strategy of the proof of Theorem \ref{theorem B} is the same as Kokkendorff's proof, but we need to replace Cheng's maximal diameter theorem and Cheeger-Colding's differentiable  sphere theorem (for the Riemannian manifold) with Ketterer's maximal diameter theorem \cite[Theorem~1.4]{MR3333056} and Honda-Mondello's topological sphere theorem \cite[Theorem~A]{zbMATH07417792} (for  metric measure spaces). 

 The radius $\mathrm{rad}(X,d)$ of a metric space $(X,d)$ is defined as
  \begin{equation*}
 \mathrm{rad}(X,d):=\inf\limits_{x\in X}\sup\limits_{y\in X}d(x,y).
  \end{equation*}
    
  Before giving the proof of Theorem \ref{theorem B}, let us collect the relevant  properties of a non-collapsed  $\mathrm{RCD}(n-1,n)$ space with full support so that we can use them in the proof. 
  \begin{enumerate}
  \item[(1).]  The diameter $\mathrm{diam}(X,d)$ is at most $\pi$ (in particular $\mathrm{rad}(X,d)\leq \pi$)  \cite[Theorem~4.8]{MR3333056}.
  \item[(2).]  $(X,d, \mathcal{H}^n)$ satisfies the generalized Bishop-Gromov inequality:
  \begin{equation*}
  \frac{ \mathcal{H}^n(B_r(x))}{ \mathcal{H}^n(B_R(x))}\geq \frac{\mathrm{Vol}_{S^n}(B_r)}{\mathrm{Vol}_{S^n}(B_R)}
  \end{equation*}
  for any $x\in X$ and  $0\leq r \leq R \leq \pi$, where $B_r(x)$ is the closed $r$-ball with center $x$ in $X$,  $B_s$ is a  closed $s$-ball in the sphere $S^n$, and $\mathrm{Vol}$ is the volume on the $S^n$. Equivalently
  \begin{equation*}
  \frac{ \mathcal{H}^n(B_r(x))}{ \mathcal{H}^n(B_R(x))}\geq \frac{\int_0^r[\sin(t)]^{n-1}dt}{\int_0^R[\sin(t)]^{n-1}dt}
  \end{equation*}
   for any $x\in X$ and  $0\leq r \leq R \leq \pi$   \cite{MR3852263}.

  \item[(3).] [Ketterer's  maximal diameter theorem ] If $\mathrm{rad}(X,d)=\pi$, then $(X,d)$ is isometric to $(S^n, d_{S^n})$ \cite[Theorem~1.4]{MR3333056}.
  \item[(4).] [Honda-Mondello's topological sphere theorem] For all $n\in \mathbb{N}_{\geq 2}$,  there exists a positive constant $\epsilon_1(n)>0$ such that if a compact metric space $(X,d)$ satisfies that $\mathrm{rad}(X,d)\geq \pi- \epsilon_1(n)$, and that $(X,d, \mu)$ is an $\mathrm{RCD}(n-1,n)$ space for some Borel measure $\mu$ on $X$ with full support, then $X$ is homeomorphic to the $n$-dimensional sphere \cite[Theorem~A]{zbMATH07417792}.
  
  \end{enumerate}

\begin{proof}[Proof of Theorem \ref{theorem B}]
Since the diameter of a non-collapsed  $\mathrm{RCD}(n-1,n)$ space is at most $\pi$, we can apply the argument in the proof of Theorem 1.3 in \cite{MR4186467} to prove  I.

 Let $\mathrm{D}(x):=\sup\{d(x,y):y\in X\}\leq \pi$ for a fixed $x\in X$,  $h(y):=d(x,y)$, and $\mu_0:=\frac{\mathcal{H}^n}{\mathcal{H}^n(X)}$, then we have
 \begin{align}
\mathrm{md}(x)&:= \int_Xh(y)\mu_0(dy)=\int_0^\infty\mu_0(\{h\geq s\})ds \\
      &=\int_0^{\mathrm{D}(x)}[1-\mu_0(h^{-1}([0,s))]ds=\int_0^{\mathrm{D}(x)}[1-\mu_0(B_s(x))]ds.\label{d}
 \end{align}

 We have the generalized Bishop-Gromov inequality for   $\mathrm{RCD}(n-1,n)$ space, i.e.,
\begin{equation}\label{Bishop-Gromov}
\mu_0(B_s(x))\geq \frac{\mathrm{Vol}(B_s)}{\mathrm{Vol}(S^n)}
\end{equation}
for $x\in X$ and  $0\leq s\leq \pi$. Hence, we get 
\begin{equation}\label{r}
 \mathrm{md}(x)=\int_Xh(y)\mu_0(dy)\leq \pi -\int_0^\pi  \frac{\mathrm{Vol}(B_s)}{\mathrm{Vol}(S^n)}ds.
\end{equation} 
The right hand side of the inequality \eqref{r} coincides with $\mathrm{md}(p)$ for any $p\in S^n$ by virtue of the formula in \eqref{d}.  As $\mathrm{md}(S^n)=\frac{\pi}{2}$ was showed by Kokkendorff above, we have $\mathrm{md}(x) \leq \frac{\pi}{2}$. That means $\mathrm{md}(X)\leq \frac{\pi}{2}$. 

 If we have equality $\mathrm{md}(X)=\frac{\pi}{2}$, then Bishop-Gromov inequality \eqref{Bishop-Gromov} must be equality for all $0\leq s\leq \pi$. It implies that for $\mu_0$-a.e. point $x$, there must exist a point $x'$ with $d(x,x')=\pi$. And this must hold even for every $x\in X$, because $X$ is compact and $\mu_0$ is full support.  Therefore, $\mathrm{rad}(X)=\pi$ and then  $(X,d)$ is isometric to $(S^n, d_{S^n})$ by Ketterer's  maximal diameter theorem.
 
 We will prove III. by showing that $\mathrm{md}(X)$  close to $\frac{\pi}{2}$ implies that $\mathrm{rad}(X,d)$ is closed to $\pi$ and by applying Honda-Mondello's topological sphere theorem. 
 
  We will prove the claim by contradiction.  Suppose that there exists $x_0\in X$ such that $D(x_0)< \pi-\epsilon_1(n)$ and that  the $x_0$ realizes $\mathrm{rad}(X,d)$. Here we take $\epsilon_1(n)$ from Honda-Mondello's topological sphere theorem. Then $\mu_0(B_s(x_0))$ will achieve its maximum value $1$ for $ \pi-\epsilon_1(n)$. Hence,  we get an estimate
 \begin{align*}
\mathrm{md}(x_0)&=\int_0^{\mathrm{D}(x)}[1-\mu_0(B_s(x))]ds\leq \int_0^{\pi-\epsilon_1(n)}[1-\frac{\mathrm{Vol}(B_s)}{\mathrm{Vol}(S^n)}]ds\\
&= \int_0^{\pi}[1-\frac{\mathrm{Vol}(B_s)}{\mathrm{Vol}(S^n)}]ds- \int_{\pi-\epsilon_1(n)}^\pi[1-\frac{\mathrm{Vol}(B_s)}{\mathrm{Vol}(S^n)}]ds =:\frac{\pi}{2}- \delta(\epsilon_1(n),n).
\end{align*}   

 Since $\mathrm{D}$ is $1$-Lipschitz, we have
 \begin{equation*}
 \mathrm{D}(y)< \pi- \frac{\epsilon_1(n)}{2}
 \end{equation*}
 for any $y\in B_{ \frac{\epsilon_1(n)}{2}}(x_0)$. This implies that we have the estimate
 \begin{equation*}
 \mathrm{md}(y)<\frac{\pi}{2}- \delta(\frac{\epsilon_1(n)}{2},n)
 \end{equation*}
for any $y\in B_{\frac{\epsilon_1(n)}{2}}(x_0)$. Finally, we have
\begin{align*}
\mathrm{md}(X) &=\int_X \mathrm{md}(x)\mu_0(dx)=\int_{X \setminus B_{ \frac{\epsilon_1(n)}{2}}(x_0)}\mathrm{md}(x)\mu_0(dx)+ \int_{ B_{ \frac{\epsilon_1(n)}{2}}(x_0)}\mathrm{md}(x)\mu_0(dx)\\
               &< \frac{\pi}{2}- \mu_0(B_{\frac{\epsilon_1(n)}{2}}(x_0))\delta(\frac{\epsilon_1(n)}{2},n)\leq \frac{\pi}{2}-\frac{\mathrm{Vol}(B_\frac{\epsilon_1(n)}{2})}{\mathrm{Vol}(S^n)}\delta(\frac{\epsilon_1(n)}{2},n).
\end{align*}
Now we take
\begin{equation*}
\epsilon(n)=\frac{\mathrm{Vol}(B_\frac{\epsilon_1(n)}{2})}{\mathrm{Vol}(S^n)}\delta(\frac{\epsilon_1(n)}{2},n)>0.
\end{equation*}
 Then $\mathrm{md}(X)\geq \frac{\pi}{2}-\epsilon(n)$ implies $\mathrm{rad}(X,d)\geq \pi- \epsilon_1(n)$. Otherwise, it would be a contradiction.  Then   $X$ is homeomorphic to the $n$-dimensional sphere by Honda-Mondello's topological sphere theorem.
\end{proof}

\begin{remark}\label{Comment 1}
The definition of metric measure spaces with the measure contraction property $\mathrm{MCP}(K, n)$ was given independently by Ohta \cite{MR2341840} and Sturm \cite{MR2237207}. On general metric measure spaces, the two definitions slightly differ, but on essentially non-branching spaces they coincide \cite[Appendix~A]{MR3691502}.  $\mathrm{MCP}(K, n)$ spaces are another kind of  generalization of Riemannian manifolds with Ricci curvature lower bound, but  a measure contraction property  is weaker than the usual curvature dimension conditions \cite[Remark~5.6]{MR2237207}.

 However, $\mathrm{MCP}(K, n)$ spaces also satisfy generalized Bishop–Gromov  inequality and the generalized Bonnet–Myers theorem  \cite[Remark~5.2]{MR2237207} \cite[Theorem~4.3;~Theorem~5.1]{MR2341840}. Therefore, Theorem \ref{theorem B}.  I. also holds for a compact non-collapsed  $\mathrm{MCP}(n-1, n)$ space with full support.
 
Ketterer and Rajala  show that the non-branching assumption is essential in   Ohta's topological rigidity \cite[Theorem~5.5]{MR2351619} in \cite[Theorem~2]{MR3336992}. It is not clear (for the author)  whether the  maximal diameter (rigidity) theorem holds for  a compact non-collapsed  and  non-branching $\mathrm{MCP}(n-1, n)$ space with full support.
\end{remark}

\begin{remark}\label{Comment 2}
 A  notion for lower bounds of Ricci curvature on Alexandrov spaces was introduced by Zhang-Zhu in \cite{MR2747437} \cite{MR2778704}. Bisho-Gromov inequality and maximal diameter theorem hold for Zhang-Zhu's spaces \cite[Corollary~5.1]{MR2747437}. Thus, one can show that Theorem \ref{theorem B}. I. and II.  hold for  an $n$-dimensional Alexandrov space without boundary, and  with full support and   Ricci curvature$\geq n-1$ in  Zhang-Zhu's sense. 
\end{remark}

Inspired by the concept of  mean distance, one can define similar metric invariants.  For  a compact metric measure space $(X,d, \mu)$  with full support,  given  a continuous function $\alpha:[0,\infty)\to \mathbb{R}$,  we can define the metric invariant $\mathrm{M_{\alpha}}(X)$ as 
\begin{equation*}
\mathrm{M_{\alpha}}(X):=\int\limits_{X\times X} \frac{\alpha( d(\cdot, \cdot))}{\mu(X)\times \mu(X)} \mu\otimes \mu  
\end{equation*}
And then one can try to generalize Theorem \ref{theorem B} to  $\mathrm{M_{\alpha}}(X)$ case. 

In particular,  Erbar and Sturm \cite{MR4186467} defines 
\begin{equation*}
\mathrm{M_{f}}(X):=\int\limits_{X\times X} \frac{f( d(\cdot, \cdot))}{\mu(X)\times \mu(X)} \mu\otimes \mu,  
\end{equation*}

\begin{equation*}
\mathrm{M_{f, n}^*}:= \frac{\int_0^\pi f(r)[\sin(r)]^{n-1}dr }{\int_0^\pi [\sin(r)]^{n-1}dr}, 
\end{equation*}
where $f:[0,\pi]\to \mathbb{R}$ is a continuous and strictly increasing function.

\begin{theorem}[Erbar-Sturm]
Let $(X,d,\mu)$ be a compact non-collapsed $\mathrm{RCD}(n-1,n)$ space with $n\geq 1$, then $\mathrm{M_{f}}(X)\leq \mathrm{M_{f, n}^*}$ and $\mathrm{M_{f}}(X)= \mathrm{M_{f, n}^*}$ if and only if $X$ is isometric to the standard round $n$-sphere $S^n$. 
\end{theorem}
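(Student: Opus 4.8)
\emph{Proof strategy.} The plan is to run the argument used for parts~I and~II of Theorem~\ref{theorem B} almost verbatim, with the identity function $r\mapsto r$ replaced by $f$ and the Lebesgue integral $\int_0^{\mathrm{D}(x)}(\cdot)\,ds$ replaced by a Lebesgue--Stieltjes integral against $df$. First I would fix $x\in X$, set $\mathrm{D}(x):=\sup_{y\in X}d(x,y)\le\pi$, $h(y):=d(x,y)$, $\mu_0:=\mathcal{H}^n/\mathcal{H}^n(X)$, and define the pointwise average $\mathrm{M_f}(x):=\int_X f(h(y))\,\mu_0(dy)$. Since $f$ is continuous and nondecreasing on $[0,\pi]$ it has a nonnegative, atomless Lebesgue--Stieltjes measure $df$ with $f(r)=f(0)+\int_{(0,r]}df$, so Fubini's theorem gives
\[
\mathrm{M_f}(x)=f(0)+\int_{(0,\pi]}\bigl[1-\mu_0(B_s(x))\bigr]\,df(s),
\]
where I have used $1-\mu_0(B_s(x))=0$ for $s\ge\mathrm{D}(x)$ (open versus closed balls being irrelevant $df$-a.e.\ because $df$ is atomless).

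Next I would insert the generalized Bishop--Gromov inequality \eqref{Bishop-Gromov}, $\mu_0(B_s(x))\ge v(s):=\mathrm{Vol}(B_s)/\mathrm{Vol}(S^n)$ for $0\le s\le\pi$. Because $df\ge 0$ and $1-v(s)\ge 0$ on $(0,\pi)$, this yields $\mathrm{M_f}(x)\le f(0)+\int_{(0,\pi]}[1-v(s)]\,df(s)$, and a second Fubini interchange, using $1-v(s)=\int_s^\pi[\sin t]^{n-1}dt \,/\, \int_0^\pi[\sin t]^{n-1}dt$, identifies this bound with $\mathrm{M_{f, n}^*}$; the same computation carried out on $(S^n,g_{st})$ in geodesic polar coordinates shows it also equals $\mathrm{M_f}(p)$ for every $p\in S^n$. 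Integrating $\mathrm{M_f}(x)\le\mathrm{M_{f, n}^*}$ against $\mu_0(dx)$ proves $\mathrm{M_f}(X)\le\mathrm{M_{f, n}^*}$. The easy direction of the rigidity statement then follows: if $X$ is isometric to $S^n$, then $\mathcal{H}^n$ is a constant multiple of the round volume, the constant cancels in the definition of $\mathrm{M_f}$, and hence $\mathrm{M_f}(X)=\mathrm{M_f}(S^n)=\mathrm{M_{f, n}^*}$.

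For the reverse implication, assume $\mathrm{M_f}(X)=\mathrm{M_{f, n}^*}$. Since $\mathrm{M_f}(x)\le\mathrm{M_{f, n}^*}$ pointwise, equality after integration forces $\mathrm{M_f}(x)=\mathrm{M_{f, n}^*}$ for $\mu_0$-a.e.\ $x$, and for each such $x$ the displayed estimate is an equality, so $\mu_0(B_s(x))=v(s)$ for $df$-a.e.\ $s$. This is the one place where the hypothesis that $f$ is \emph{strictly} increasing is used: it makes $df$ have full topological support on $[0,\pi]$, so $\mu_0(B_s(x))=v(s)$ on a dense set of $s$, and then, since $s\mapsto\mu_0(B_s(x))$ is monotone, $v$ is continuous, and $\mu_0(B_s(x))\ge v(s)$ always, in fact $\mu_0(B_s(x))=v(s)$ for all $s\in[0,\pi]$. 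As $v(s)<1$ for $s<\pi$ and $\mu_0$ has full support, $X\setminus B_s(x)\ne\emptyset$ for every $s<\pi$, so $\mathrm{D}(x)=\pi$ and, by compactness, some $x'$ satisfies $d(x,x')=\pi$. Finally, $\mathrm{D}$ is $1$-Lipschitz, so $\{\mathrm{D}=\pi\}$ is closed; it has full $\mu_0$-measure and $\mu_0$ has full support, whence $\{\mathrm{D}=\pi\}=X$ and $\mathrm{rad}(X,d)=\pi$. Ketterer's maximal diameter theorem then gives that $(X,d)$ is isometric to $(S^n,d_{S^n})$.

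The bulk of the actual work should be routine bookkeeping: justifying the two Fubini interchanges and the open-versus-closed-ball normalisations for a merely continuous, possibly sign-changing $f$, and, in the equality case, the two upgrades ``$df$-a.e.\ $s$'' $\Rightarrow$ ``all $s$'' and ``$\mu_0$-a.e.\ $x$'' $\Rightarrow$ ``all $x$''. All of the geometric content is already supplied by the generalized Bishop--Gromov inequality and Ketterer's maximal diameter theorem quoted above, so I expect no new geometric input is needed; the proof simply reduces to that of Theorem~\ref{theorem B} with $r$ replaced by $f$ throughout.
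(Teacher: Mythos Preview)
The paper does not give its own proof of this statement; it is quoted as a theorem of Erbar--Sturm and used as input for Corollary~\ref{sphere}. Your proposal is correct and is exactly the natural extension of the paper's proof of Theorem~\ref{theorem B}, parts~I and~II (which the paper itself attributes to the argument in Erbar--Sturm): you replace the layer-cake formula $\int_X h\,d\mu_0=\int_0^{\mathrm{D}(x)}[1-\mu_0(B_s(x))]\,ds$ by its Lebesgue--Stieltjes version against $df$, apply the same Bishop--Gromov comparison, and in the equality case use the strict monotonicity of $f$ to pass from ``$df$-a.e.\ $s$'' to ``all $s$'' before invoking Ketterer's rigidity. This is precisely the approach the paper (and Erbar--Sturm) have in mind, so there is nothing to add.
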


Erbar-Sturm also noticed that an analogous statement (with $\mathrm{M_{f}}(X)\geq  \mathrm{M_{f, n}^*}$ in place of  $\mathrm{M_{f}}(X)\leq \mathrm{M_{f, n}^*}$) holds for strictly decreasing $f$. Without loss of  generality, let us assume that $f$ is  continuous and strictly increasing. Then we have the following corollary by combining Erbar-Sturm's theorem and the proof of Theorem \ref{theorem B}.

\begin{corollary}\label{sphere}
Let $(X,d, \mathcal{H}^n)$ be a  compact non-collapsed  $\mathrm{RCD}(n-1,n)$ space with full support, then there is an $\epsilon_2(f,n)>0$ such that $\mathrm{M_{f}}(X)\geq \mathrm{M_{f, n}^*}-\epsilon_2(f,n)$ implies that $X$ is homeomorphic to  $S^n$.
\end{corollary}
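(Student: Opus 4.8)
The plan is to follow the proof of Theorem~\ref{theorem B}.III verbatim, replacing the metric invariant $\mathrm{md}$ by $\mathrm{M}_f$ and the distance function by $f\circ d$, and using Erbar--Sturm's theorem in place of the parts of Theorem~\ref{theorem B} that were proved by hand. First I would record the pointwise estimate. Fix $x\in X$, set $\mu_0:=\mathcal{H}^n/\mathcal{H}^n(X)$, $\mathrm{D}(x):=\sup_{y\in X}d(x,y)\le\pi$, and $\mathrm{M}_f(x):=\int_X f(d(x,y))\,\mu_0(dy)$. Writing $f(t)=f(0)+\int_0^t df(s)$ and applying Fubini (the layer-cake / Lebesgue--Stieltjes representation, valid since $f$ is continuous and increasing so that $df$ is a nonnegative, atomless measure), exactly as in \eqref{d}, gives
\begin{equation*}
\mathrm{M}_f(x)=f(0)+\int_0^{\mathrm{D}(x)}\bigl[1-\mu_0(B_s(x))\bigr]\,df(s).
\end{equation*}
The integrand is nonnegative, and the generalized Bishop--Gromov inequality \eqref{Bishop-Gromov} gives $1-\mu_0(B_s(x))\le 1-\tfrac{\mathrm{Vol}(B_s)}{\mathrm{Vol}(S^n)}$, so enlarging the range of integration to $[0,\pi]$ yields $\mathrm{M}_f(x)\le \mathrm{M}_{f,n}^*$ (here one uses that $\mathrm{M}_{f,n}^*$ is precisely the common value $\mathrm{M}_f(p)$ for $p\in S^n$, which is the content of the same formula on the round sphere and underlies Erbar--Sturm's theorem). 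More quantitatively, the deficit is controlled by how far $\mathrm{D}(x)$ is from $\pi$:
\begin{equation*}
\mathrm{M}_{f,n}^*-\mathrm{M}_f(x)\ \ge\ \int_{\mathrm{D}(x)}^{\pi}\Bigl[1-\tfrac{\mathrm{Vol}(B_s)}{\mathrm{Vol}(S^n)}\Bigr]\,df(s).
\end{equation*}

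Next, for $\eta\in(0,\pi)$ set $\delta_f(\eta,n):=\int_{\pi-\eta}^{\pi}\bigl[1-\tfrac{\mathrm{Vol}(B_s)}{\mathrm{Vol}(S^n)}\bigr]\,df(s)$; this is strictly positive because $f$ is \emph{strictly} increasing, so $df$ assigns positive mass to every nondegenerate interval, and $\tfrac{\mathrm{Vol}(B_s)}{\mathrm{Vol}(S^n)}<1$ for $s<\pi$. Now argue by contradiction, as in Theorem~\ref{theorem B}.III: suppose $\mathrm{rad}(X,d)<\pi-\epsilon_1(n)$, with $\epsilon_1(n)$ from Honda--Mondello's topological sphere theorem, and let $x_0\in X$ realize the radius, so $\mathrm{D}(x_0)<\pi-\epsilon_1(n)$. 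Then $\mathrm{M}_f(x_0)\le \mathrm{M}_{f,n}^*-\delta_f(\epsilon_1(n),n)$, and since $\mathrm{D}$ is $1$-Lipschitz we get $\mathrm{D}(y)<\pi-\tfrac{\epsilon_1(n)}{2}$, hence $\mathrm{M}_f(y)\le \mathrm{M}_{f,n}^*-\delta_f(\tfrac{\epsilon_1(n)}{2},n)$, for all $y\in B_{\epsilon_1(n)/2}(x_0)$. Splitting $\mathrm{M}_f(X)=\int_X\mathrm{M}_f(x)\,\mu_0(dx)$ over $B_{\epsilon_1(n)/2}(x_0)$ and its complement, bounding $\mathrm{M}_f\le\mathrm{M}_{f,n}^*$ off the ball, and using Bishop--Gromov once more to bound $\mu_0(B_{\epsilon_1(n)/2}(x_0))$ from below, I obtain
\begin{equation*}
\mathrm{M}_f(X)\ \le\ \mathrm{M}_{f,n}^*-\frac{\mathrm{Vol}(B_{\epsilon_1(n)/2})}{\mathrm{Vol}(S^n)}\,\delta_f\Bigl(\tfrac{\epsilon_1(n)}{2},n\Bigr).
\end{equation*}
Setting $\epsilon_2(f,n):=\tfrac{\mathrm{Vol}(B_{\epsilon_1(n)/2})}{\mathrm{Vol}(S^n)}\,\delta_f(\tfrac{\epsilon_1(n)}{2},n)>0$, the hypothesis $\mathrm{M}_f(X)\ge\mathrm{M}_{f,n}^*-\epsilon_2(f,n)$ contradicts the last display; hence $\mathrm{rad}(X,d)\ge\pi-\epsilon_1(n)$, and Honda--Mondello's theorem applies to give that $X$ is homeomorphic to $S^n$. (The decreasing case is symmetric, replacing $f$ by $-f$.)

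The bookkeeping with the Stieltjes integral and the ball-splitting estimate is routine and identical to the $\mathrm{md}$ case. The only genuinely new point, and the step I expect to need the most care, is the strict positivity of $\delta_f(\eta,n)$: this is where the hypothesis that $f$ is \emph{strictly} increasing (equivalently, that $df$ charges $[\pi-\eta,\pi]$) is essential, together with $\mathrm{Vol}(B_s)<\mathrm{Vol}(S^n)$ for $s<\pi$; if $f$ were merely nondecreasing and happened to be constant on a neighborhood of $\pi$, then $\delta_f(\eta,n)$ could vanish for small $\eta$ and the whole scheme would break down. A secondary point to verify is the identification $\mathrm{M}_{f,n}^*=\mathrm{M}_f(p)$ for $p\in S^n$ and the validity of the layer-cake formula for a continuous, possibly sign-changing $f$, but both are immediate once $f$ is written as $f(0)$ plus the integral of its (here nonnegative) Stieltjes measure.
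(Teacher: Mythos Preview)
Your proof is correct and follows essentially the same route as the paper's: argue by contradiction that if $\mathrm{rad}(X,d)<\pi-\epsilon_1(n)$ then a layer-cake estimate at and near the radius-realizing point $x_0$, combined with Bishop--Gromov and the ball-splitting, forces $\mathrm{M}_f(X)$ strictly below $\mathrm{M}_{f,n}^*$ by a definite amount, whence Honda--Mondello applies. The only cosmetic difference is that you write the layer-cake via the Stieltjes measure $df(s)$ on $[\pi-\eta,\pi]$, whereas the paper performs the change of variables $u=f(s)$ and integrates over $[f(\pi-\eta),f(\pi)]$; the two expressions for $\delta$ coincide, and your explicit remark on why strict monotonicity of $f$ guarantees $\delta_f(\eta,n)>0$ is a welcome clarification.
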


\begin{proof}
We only need to show that  $\mathrm{M_{f}}(X)$ close to $\mathrm{M_{f, n}^*}$ implies that $\mathrm{rad}(X,d)$ is closed to $\pi$.  We will prove the claim by contradiction. 

 Suppose that there exists $x_0\in X$ such that $D(x_0)< \pi-\epsilon_1(n)$ and the $x_0$ realizes $\mathrm{rad}(X,d)$. Here we take $\epsilon_1(n)$ from Honda-Mondello's topological sphere theorem. Let $g(y):=f(d(x_0,y))$,  $\mu_0:=\frac{\mathcal{H}^n}{\mathcal{H}^n(X)}$, and $\mathrm{M_{f}}(x_0):=\int_X g(y)\mu_0(dy)$,  then $\mu_0(B_s(x_0))$ will achieve its maximum value $1$ for $ \pi-\epsilon_1(n)$.
 
  Hence,  we get an estimate
  \begin{equation*}
  \mathrm{M_{f}}(x_0)<\mathrm{M_{f, n}^*} - \int_{f(\pi-\epsilon_1(n))}^{f(\pi)}[1- \frac{\int_0^{f^{-1}(s)}[\sin(t)]^{n-1}dt}{\int_0^{\pi}[\sin(t)]^{n-1}dt}]ds:=\mathrm{M_{f, n}^*}-  \delta_1(f,\epsilon_1(n),n).
  \end{equation*}
  
 Since $\mathrm{D}$ is $1$-Lipschitz and $f$ is continuous and strictly increasing, we have
 \begin{equation*}
 f(\mathrm{D}(y))< f(\pi- \frac{\epsilon_1(n)}{2})
 \end{equation*}
 for any $y\in B_{ \frac{\epsilon_1(n)}{2}}(x_0)$. It implies that we have the estimate
 \begin{equation*}
  \mathrm{M_{f}}(y)<\mathrm{M_{f, n}^*}-  \delta_1(f, \frac{\epsilon_1(n)}{2}),n)
 \end{equation*}
for any $y\in B_{\frac{\epsilon_1(n)}{2}}(x_0)$. Finally, we have
\begin{align*}
  \mathrm{M_{f}}(X) &=\int_X   \mathrm{M_{f}}(x)\mu_0(dx)=\int_{X \setminus B_{ \frac{\epsilon_1(n)}{2}}(x_0)}  \mathrm{M_{f}}(x)\mu_0(dx)+ \int_{ B_{ \frac{\epsilon_1(n)}{2}}(x_0)}  \mathrm{M_{f}}(x)\mu_0(dx)\\
               &< \mathrm{M_{f, n}^*}- \mu_0(B_{\frac{\epsilon_1(n)}{2}}(x_0))\delta_1(f,\frac{\epsilon_1(n)}{2},n)\leq  \mathrm{M_{f, n}^*}-\frac{\mathrm{Vol}(B_\frac{\epsilon_1(n)}{2})}{\mathrm{Vol}(S^n)}\delta_1(f, \frac{\epsilon_1(n)}{2},n).
\end{align*}

Now we take
\begin{equation*}
 \epsilon_2(f,n)=\frac{\mathrm{Vol}(B_\frac{\epsilon_1(n)}{2})}{\mathrm{Vol}(S^n)}\delta_1(f, \frac{\epsilon_1(n)}{2},n))>0.
\end{equation*}
 Then $\mathrm{M_{f}}(X)\geq \mathrm{M_{f, n}^*}-\epsilon(n)$ implies $\mathrm{rad}(X,d)\geq \pi- \epsilon_1(n)$. Otherwise, it would be a contradiction.  Then   $X$ is homeomorphic to the $n$-dimensional sphere by Honda-Mondello's topological sphere theorem.

\end{proof}

\begin{remark}
The result of Corollary \ref{sphere} is even new (for $f\neq \mathrm{id}$) for a smooth Riemannian $n$-manifold with Ricci curvature bounded below by $(n-1)$ and $X$ is  diffeomorphic to  $S^n$ by Cheeger-Colding's  differentiable  sphere theorem.

\end{remark}

Gromov defines the observable diameter $\mathrm{ObsDiam}(X;-\kappa)$ for metric measure spaces $(X,d,\mu)$ in \cite{MR1699320} and shows the following theorem. The detailed proof can be found in \cite[Theorem~2.29]{MR3445278}.

\begin{theorem}[Gromov]
Let $X$ be a closed $n$-dimensional Riemannian manifold with Ricci curvature$\geq (n-1)$. Then, for any $\kappa$ with $0<\kappa\leq 1$, we have 
\begin{equation*}
 \mathrm{ObsDiam}(X;-\kappa)\leq  \mathrm{ObsDiam}(S^n;-\kappa)=\pi-2v^{-1}(\frac{\kappa}{2})
\end{equation*}
where 
\begin{equation*}
v(r):=\frac{\int_0^r[\sin(t)]^{n-1}dt}{\int_0^{\pi}[\sin(t)]^{n-1}dt}.
\end{equation*}

\end{theorem}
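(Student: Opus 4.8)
The plan is to recover this from the Lévy--Gromov isoperimetric inequality together with the standard passage from isoperimetry to concentration of $1$-Lipschitz functions. Recall that for a probability space $(X,\mu)$ one sets $\mathrm{diam}(\mu;1-\kappa):=\inf\{\mathrm{diam}(A):A\subseteq\mathbb{R}\ \text{Borel},\ \mu(A)\ge 1-\kappa\}$ and $\mathrm{ObsDiam}(X;-\kappa):=\sup\{\mathrm{diam}(f_*\mu_X;1-\kappa):f\colon X\to\mathbb{R}\ \text{1-Lipschitz}\}$, where $\mu_X$ is the Riemannian volume normalized to total mass $1$. Thus it suffices to show: (i) for every $1$-Lipschitz $f\colon X\to\mathbb{R}$ there is an interval of length $\pi-2v^{-1}(\kappa/2)$ carrying $f_*\mu_X$-mass at least $1-\kappa$; and (ii) on $S^n$ this value is attained. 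Note that $v$ is a strictly increasing homeomorphism of $[0,\pi]$ onto $[0,1]$, so $v^{-1}(\kappa/2)$ is well defined, and $\mu_X$ is atomless, so medians exist.

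First I would invoke the Lévy--Gromov isoperimetric inequality: for a closed Riemannian $n$-manifold $X$ with $\mathrm{Ric}\ge n-1$, the isoperimetric profile $I_X$ dominates $I_{S^n}$; concretely, for any Borel $\Omega\subseteq X$, if $B\subseteq S^n$ is a metric ball with $\mathrm{Vol}(B)/\mathrm{Vol}(S^n)=\mathrm{Vol}(\Omega)/\mathrm{Vol}(X)$ then the normalized perimeters satisfy $\mathrm{Per}(\Omega)/\mathrm{Vol}(X)\ge\mathrm{Per}(B)/\mathrm{Vol}(S^n)$. From this I would deduce the comparison of enlargements: if $A\subseteq X$ with $\mu_X(A)\ge\tfrac12$ and $A_r:=\{x:d(x,A)\le r\}$, then $v_A(r):=\mu_X(A_r)$ is nondecreasing and satisfies $v_A'(r)\ge I_{S^n}(v_A(r))$ in the sense of outer Minkowski content; comparing with the solution $w(r)=v(\tfrac{\pi}{2}+r)$ of $w'=I_{S^n}(w)$, $w(0)=\tfrac12$, which describes the $r$-enlargement of a hemisphere in $S^n$ (balls being isoperimetric there), gives $\mu_X(A_r)\ge v(\tfrac{\pi}{2}+r)=1-v(\tfrac{\pi}{2}-r)$.

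Now the concentration step. Fix a $1$-Lipschitz $f$ and a median $m$, so $\mu_X(\{f\le m\})\ge\tfrac12$ and $\mu_X(\{f\ge m\})\ge\tfrac12$. Since $f$ is $1$-Lipschitz, $\{f\le m+r\}\supseteq\{f\le m\}_r$ and $\{f\ge m-r\}\supseteq\{f\ge m\}_r$, so the enlargement estimate yields $\mu_X(\{f>m+r\})\le v(\tfrac{\pi}{2}-r)$ and $\mu_X(\{f<m-r\})\le v(\tfrac{\pi}{2}-r)$, hence $f_*\mu_X([m-r,m+r])\ge 1-2v(\tfrac{\pi}{2}-r)$. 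Taking $r=\tfrac{\pi}{2}-v^{-1}(\kappa/2)$ makes the right-hand side $\ge 1-\kappa$, so $\mathrm{diam}(f_*\mu_X;1-\kappa)\le 2r=\pi-2v^{-1}(\kappa/2)$; the supremum over $f$ gives $\mathrm{ObsDiam}(X;-\kappa)\le\pi-2v^{-1}(\kappa/2)$. For the equality on $S^n$, the inequality $\le$ is the case $X=S^n$ just proved, and for $\ge$ I would take $f(x)=\tfrac{\pi}{2}-d(x,p)$ (the signed distance to the equator $\{d(\cdot,p)=\tfrac{\pi}{2}\}$), which is $1$-Lipschitz; its pushforward is symmetric and unimodal with $f_*\mu_X([-r,r])=1-2v(\tfrac{\pi}{2}-r)$, so any interval of $f_*\mu_X$-mass $\ge 1-\kappa$ has length at least $\pi-2v^{-1}(\kappa/2)$.

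The main obstacle is the Lévy--Gromov inequality itself, which is the only genuinely hard input — for smooth Riemannian manifolds it is classical (Gromov; see also Bérard--Besson--Gallot), and in fact it now has a synthetic $\mathrm{RCD}$ version due to Cavalletti--Mondino, which is why the present circle of ideas transfers to the metric measure setting. Everything downstream — the ODE comparison for enlargements, the Lipschitz level-set inclusions, the existence of a median, and the elementary computation of $\mathrm{ObsDiam}(S^n;-\kappa)$ from the symmetric unimodal density $\propto\cos^{n-1}$ of the equator-distance function — is routine. Since the statement is cited, in practice I would quote Lévy--Gromov and present only the two short reductions above.
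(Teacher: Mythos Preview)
The paper does not actually prove this theorem; it merely states it as a result of Gromov and points to Shioya's book for the detailed argument, so there is no in-paper proof to compare your proposal against. Your argument is correct and is exactly the standard route one finds in the cited literature: L\'evy--Gromov isoperimetry, the ODE comparison $v_A' \ge I_{S^n}(v_A)$ for enlargements of half-measure sets, concentration of a $1$-Lipschitz function around a median, and sharpness witnessed by the distance-to-a-point function on $S^n$ (whose pushforward has the symmetric unimodal density $\propto \cos^{n-1}$, for which the shortest set of prescribed mass is the centered interval). Nothing is missing.
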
 

It is not clear whether Gromov's theorem can be generalized to non-collapse $\mathrm{RCD}(n-1,n)$ spaces. If it can, can we show  the sphere theorem about it?
\\

The note has no associated data.

\bibliographystyle{alpha}
\bibliography{reference}
\Addresses

\end{document}